\pgfplotsset{compat=1.11}
\newcommand{\Spvek}[2][c]{%
  \gdef\@VORNE{1}
  \left(\hskip-\arraycolsep%
    \begin{array}{#1}\vekSp@lten{#2}\end{array}%
  \hskip-\arraycolsep\right)}
\def\vekSp@lten#1{\xvekSp@lten#1;vekL@stLine;}
\def\vekL@stLine{vekL@stLine}
\def\xvekSp@lten#1;{\def\temp{#1}%
  \ifx\temp\vekL@stLine
  \else
    \ifnum\@VORNE=1\gdef\@VORNE{0}
    \else\@arraycr\fi%
    #1%
    \expandafter\xvekSp@lten
  \fi}
\newcommand{\tvect}[3]{%
   \ensuremath{\Bigl(\negthinspace\begin{smallmatrix}#1\\#2\\#3\end{smallmatrix}\Bigr)}}
\tikzstyle{empty}=[circle,draw=black!80,thick]
\tikzstyle{emptyn}=[circle,draw=black!80,fill=white,scale=0.5]
\tikzstyle{nero}=[circle,draw=black!80,fill=black!80,thick]
\theoremstyle{plain}
\newtheorem*{theorem*}{Theorem}
\newtheorem{theorem}{Theorem}[section]
\newtheorem{lemma}[theorem]{Lemma}
\newtheorem{proposition}[theorem]{Proposition}
\newtheorem*{claim*}{Claim}
\newtheorem{question}[theorem]{Question}
\theoremstyle{remark}
\newcommand{\fo}{f_{\text{odd}}}
\let\emptyset\varnothing
\let\eps\varepsilon
\let\originalleft\left
\let\originalright\right
\renewcommand{\left}{\mathopen{}\mathclose\bgroup\originalleft}
\renewcommand{\right}{\aftergroup\egroup\originalright}
\begin{document}

\title{Partition problems in high dimensional boxes}

\author{Matija Bucic}
\thanks{The first author was supported in part by SNSF grant 200021-175573.}
\address{Department of Mathematics, ETH, R\"amistrasse 101, 8092 Z\"urich, Switzerland}
\email{\texttt{matija.bucic}@\texttt{math.ethz.ch}}

\author{Bernard Lidick\'{y}}
\thanks{The second author was supported in part by NSF grant DMS-1600390.}
\address{Department of Mathematics, Iowa State University, 396 Carver Hall, Ames, IA 50011, USA}
\email{lidicky@iastate.edu}

\author{Jason Long}
\address{Department of Pure Mathematics and Mathematical Statistics, University of Cambridge, Wilberforce Road, Cambridge CB3\thinspace0WB, UK}
\email{jl694@cam.ac.uk}

\author{Adam Zsolt Wagner}
\address{Department of Mathematics, University of Illinois, 1409 W.\/ Green Street, Urbana IL 61801, USA}
\email{zawagne2@illinois.edu}

\date{}

\subjclass[2010]{Primary 05D05; Secondary 05B45}

\begin{abstract}
Alon, Bohman, Holzman and Kleitman proved that any partition of a $d$-dimensional discrete box into proper sub-boxes must consist of at least $2^d$ sub-boxes. Recently,  Leader, Mili\'{c}evi\'{c} and Tan considered the question of how many odd-sized proper boxes are needed to partition a $d$-dimensional box of odd size, and they asked whether the trivial construction consisting of $3^d$ boxes is best possible. We show that approximately $2.93^d$ boxes are enough, and consider some natural generalisations.
\end{abstract}
\maketitle

\section{Introduction}

The following lovely problem, due to Kearnes and Kiss~\cite[][Problem 5.5]{kearneskiss}, was presented at the open problem session at the August 1999 meeting at MIT that was held to celebrate Daniel Kleitman's 65th birthday~\cite{saks}. A set of the form $$A=A_1 \times A_2 \times \ldots \times A_d,$$ where $A_1, A_2,\ldots,A_d$ are finite sets with $|A_i|\geq 2$ will be called here a \emph{$d$-dimensional discrete box}. A set of the form $B=B_1\times B_2\times\ldots\times B_d$, where $B_i\subseteq A_i$ for all $i\in [d]$, is a \emph{sub-box} of $A$. Such a sub-box $B$ is said to be \emph{proper} if $B_i \neq A_i$ for every $i$. The question of Kearnes and Kiss was as follows: can the box $A=A_1 \times A_2 \times \ldots \times A_d$ be partitioned into fewer than $2^d$ proper sub-boxes? 

Within a day, Alon, Bohman, Holzman and Kleitman solved~\cite{alonbohman} this problem. Their eventual distillation of the proof, which we present in Section~\ref{sec:setup}, is a ``proof from the book''.
\begin{theorem}[\label{thm:alon}Alon, Bohman, Holzman, Kleitman~\cite{alonbohman}]
Let $A$ be a $d$-dimensional discrete box, and let $\{B^1,B^2,\ldots,B^m\}$ be a partition of $A$ into proper sub-boxes. Then $m\geq 2^d$.
\end{theorem}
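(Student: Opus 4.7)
The plan is the polynomial method: convert the partition into a polynomial identity whose right-hand side has exactly $2^d$ nonzero monomials. For each coordinate $i$, fix a distinguished element $a_i\in A_i$ (possible since $|A_i|\ge 2$), and introduce formal variables $z_1,\ldots,z_d$. For every sub-box $B=B_1\times\cdots\times B_d$ define
$$P_B(z_1,\dots,z_d)\;=\;\prod_{i=1}^{d}\Bigl(\mathbf{1}[a_i\in B_i]\,z_i+|B_i|-\mathbf{1}[a_i\in B_i]\Bigr);$$
equivalently, $P_B(z)=\sum_{x\in B}\prod_i w_i(x_i;z_i)$ with $w_i(k;z_i)=z_i$ if $k=a_i$ and $w_i(k;z_i)=1$ otherwise. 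The $i$-th factor of $P_B$ is either the positive constant $|B_i|$ (when $a_i\notin B_i$) or the linear polynomial $z_i+|B_i|-1$ (when $a_i\in B_i$), and properness of $B$ makes every factor strictly ``smaller'' than the corresponding factor $z_i+(|A_i|-1)$ of $P_A$. Summing the trivial identity $\sum_j\mathbf{1}_{B^j}=\mathbf{1}_A$ against $\prod_i w_i(\cdot;z_i)$ yields
$$\sum_{j=1}^{m}P_{B^j}(z)\;=\;P_A(z)\;=\;\prod_{i=1}^{d}\bigl(z_i+|A_i|-1\bigr),$$
whose expansion produces exactly $2^d$ monomials $\prod_{i\in S}z_i$ with strictly positive coefficients $\prod_{i\notin S}(|A_i|-1)$.

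The task then reduces to the purely algebraic claim that $P_A$ cannot be written as a sum of fewer than $2^d$ product polynomials of the ``strictly smaller'' form described. I would prove this by induction on $d$. The base $d=1$ is an immediate check on the coefficients of $z_1^0$ and $z_1^1$. For the inductive step, split the boxes into $L=\{j:a_d\in B_d^j\}$ and $C=\{j:a_d\notin B_d^j\}$, and compare the coefficient of $z_d$ on both sides of the identity. This yields
$$\sum_{j\in L}P_{B'^j}(z_1,\ldots,z_{d-1})\;=\;\prod_{i=1}^{d-1}(z_i+|A_i|-1),$$
where $B'^j=B_1^j\times\cdots\times B_{d-1}^j$ is a proper sub-box of $A'=A_1\times\cdots\times A_{d-1}$ (since $B^j$ being proper in $A$ forces $B_i^j\ne A_i$ for every $i<d$). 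Applying the inductive hypothesis gives $|L|\ge 2^{d-1}$. A parallel manipulation of the constant-in-$z_d$ coefficient, eliminated against a multiple of the previous identity, produces an analogous identity for the $C$-boxes with strictly positive weights, and applying the inductive hypothesis once more yields $|C|\ge 2^{d-1}$. Adding gives $m\ge 2^d$.

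The main obstacle, as I see it, is formulating the right strengthening of the inductive hypothesis to accommodate the weighted identity that arises when handling $C$. One needs the generalised statement that $P_{A'}$ cannot be written as a strictly positive combination of fewer than $2^{d-1}$ such product polynomials with strictly smaller factors, and then verifying that this weighted version is still provable by the same induction. Once this is in place, the two halves of the inductive step fit together cleanly and give $m\ge 2^d$.
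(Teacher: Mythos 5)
Your approach—encoding the partition as a polynomial identity $\sum_j P_{B^j}=P_A$ with $P_A=\prod_i(z_i+|A_i|-1)$ and inducting on $d$—is genuinely different from the paper's proof, which is a short probabilistic/parity argument (pick $R_i\subseteq A_i$ uniformly among odd-sized subsets, observe $\Pr[|B_i^j\cap R_i|\text{ odd}]=1/2$ for proper nonempty $B_i^j$, and note $\sum_j\mathbf 1[|B^j\cap R|\text{ odd}]\equiv|R|\equiv 1\pmod 2$). However, your induction has a gap that the strengthening you propose does not repair, because that strengthening is false.

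The $L$ half is actually fine, but for a reason you do not need the polynomial abstraction for: the boxes in $L$ are exactly those that meet the slice $A_1\times\cdots\times A_{d-1}\times\{a_d\}$, so their projections $B'^j$ genuinely \emph{partition} $A'$, and the ordinary inductive hypothesis gives $|L|\ge 2^{d-1}$. The trouble is entirely in the $C$ half. Extracting the $z_d^0$-coefficient and subtracting $(n_d-1)$ times the $z_d^1$-identity gives
\[
\sum_{j\in C}c_j\,|B_d^j|\prod_{i<d}q_{ij}\;=\;\sum_{j\in L}c_j\bigl(n_d-1-b_{dj}\bigr)\prod_{i<d}q_{ij},
\]
which is \emph{not} an expression of $P_{A'}$ as a positive combination over $C$ alone; it equates a $C$-combination to an unknown $L$-combination, and you have no handle on the right-hand side. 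So you cannot ``apply the inductive hypothesis once more'' to conclude $|C|\ge 2^{d-1}$.

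Your proposed fix is to strengthen the inductive statement to: $P_{A'}$ cannot be a strictly positive combination of fewer than $2^{d-1}$ ``proper'' product polynomials. But this strengthened statement is false. Take $d=2$, $A=[3]^2$, $a_1=a_2=1$, and the three proper sub-boxes $B^1=\{1\}\times\{1\}$, $B^2=\{1,2\}\times\{2,3\}$, $B^3=\{2,3\}\times\{1,2\}$. Then $P_{B^1}=z_1z_2$, $P_{B^2}=2(z_1+1)$, $P_{B^3}=2(z_2+1)$, and
\[
P_{B^1}+P_{B^2}+P_{B^3}=z_1z_2+2z_1+2z_2+4=(z_1+2)(z_2+2)=P_A,
\]
with only $3<2^2$ terms (indeed with unit coefficients $c_j=1$). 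These three boxes do not partition $[3]^2$ — they overlap — which is exactly the information that is lost when you pass from the set identity $\sum_j\mathbf 1_{B^j}=\mathbf 1_A$ to the weaker polynomial identity $\sum_j P_{B^j}=P_A$. Since the polynomial $P_B$ only records $|B_i|$ and whether $a_i\in B_i$, too much is forgotten for the induction to close, and the $C$-boxes (unlike the $L$-boxes) do not project to a partition of $A'$ that would let you fall back on the unstrengthened hypothesis. To make a dimension-reduction induction of this flavour work one needs to retain more structure than the polynomial carries; the paper's probabilistic argument avoids the issue entirely by never reducing dimension.
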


The following interesting question was recently posed by Leader, Mili\'{c}evi\'{c} and Tan~\cite{leader}. Say that the $d$-dimensional box $A=A_1\times A_2\times \ldots \times A_d$ is \emph{odd} if each $|A_i|$ is odd (and finite). Similarly, say that the sub-box $B=B_1\times B_2\times \ldots \times B_d$ is \emph{odd} if $|B_i|$ is odd  for all $i$. It is easy to see that given a $d$-dimensional odd box $A$, there exists a partition of $A$ into $3^d$ odd proper sub-boxes, by partitioning each side into three odd parts and taking all possible products.

\begin{question}[\label{qu:leader}Leader, Mili\'{c}evi\'{c}, Tan~\cite{leader}]
Let $A$ be a $d$-dimensional odd box, and let $\{B^1,B^2,\ldots,B^m\}$ be a partition of $A$ into odd proper sub-boxes. Does it follow that then $m\geq 3^d$?.
\end{question}
Our first result is that the answer to this question is `no':
\begin{theorem}\label{thm:main}
 Let $d\in\mathbb{Z}^+$ be divisible by $3$. Then there exists a partition of $[5]^d$ into  $25^{d/3}\leq 2.93^d$ odd proper sub-boxes. 
\end{theorem}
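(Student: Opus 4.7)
The plan is to prove Theorem~\ref{thm:main} in two steps: a base case in dimension three, and then a tensor-product argument that lifts the base case to all $d$ divisible by $3$.

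The tensor-product step will be the easy one. Suppose we have a partition $[5]^3 = \bigsqcup_{i=1}^{25} B^i$ into odd proper sub-boxes, with each $B^i = B^i_1 \times B^i_2 \times B^i_3$ and each $|B^i_j| \in \{1,3\}$. Then, writing $d = 3k$, the $k$-fold product collection $\{B^{i_1} \times B^{i_2} \times \cdots \times B^{i_k} : (i_1, \ldots, i_k) \in [25]^k\}$ is a partition of $[5]^{3k}$ into $25^k = 25^{d/3}$ sub-boxes, each being a product of $d$ subsets of $[5]$ of odd size $1$ or $3$, hence odd and proper. Disjointness and covering are inherited from the base partition. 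The numerical bound $25^{d/3} \le 2.93^d$ then follows from $25^{1/3} < 2.93$, equivalently $2.93^3 > 25$.

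The main obstacle is therefore the construction of the base partition of $[5]^3$ into $25$ odd proper sub-boxes. The obvious $27$-box partition obtained from $[5] = \{1\} \sqcup \{2,3,4\} \sqcup \{5\}$ in each coordinate cannot be reduced by any local merging: the union of any two of its three one-dimensional parts has even size, and the union of all three equals $[5]$ and so is not proper. A qualitatively different construction is required, and it will almost certainly have to use non-contiguous odd proper $3$-subsets of $[5]$ such as $\{1,3,5\}$. To narrow the search, I would begin with a volume count: if the partition contains $a, b, c, e$ boxes of volumes $1, 3, 9, 27$ respectively, then $a+b+c+e = 25$ and $a+3b+9c+27e = 125$; moreover $e \le 1$, since any two $3\times 3\times 3$ sub-boxes of $[5]^3$ must intersect (any two $3$-subsets of $[5]$ meet in at least one element in each coordinate, forcing a common cell). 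This leaves only a short list of feasible profiles, and within that list I would look for an explicit partition, likely involving sub-boxes that straddle multiple slabs in one or more coordinate directions. Once a candidate is written down, verifying that it tiles $[5]^3$ reduces to inspecting the $25$ sub-boxes, after which the tensor-product step above completes the proof.
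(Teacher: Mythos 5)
Your reduction to a three-dimensional base case is correct and is exactly the paper's approach: the submultiplicative inequality $\fo(n,d_1+d_2)\le\fo(n,d_1)\cdot\fo(n,d_2)$ coming from taking products of partitions is the paper's inequality~\eqref{eq:submult}, and the bound $25^{1/3}<2.93$ is a routine check. So the lifting step is fine.

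The gap is the base case itself. Theorem~\ref{thm:main} is an existence statement, and the single piece of genuine content in the proof is an explicit partition of $[5]^3$ into $25$ odd proper sub-boxes; this is precisely what the paper supplies (Figure~\ref{fig:25oddbox} and the coordinate list in the appendix, found by an integer-program search). You correctly identify this as ``the main obstacle'' and make some useful narrowing observations --- the obvious $27$-box tiling admits no local merges, a non-brick subset such as $\{1,3,5\}$ must appear, the volume count $a+3b+9c+27e=125$ with $a+b+c+e=25$, and $e\le 1$ because any two $3$-subsets of $[5]$ intersect --- but ``I would look for an explicit partition \ldots\ once a candidate is written down, verifying it reduces to inspecting the $25$ sub-boxes'' is a description of a search procedure, not a proof. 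Nothing you have written rules out the possibility that no such partition exists (indeed, if one did not, the theorem would be false and the volume count alone would not tell you). To close the gap you must actually exhibit the $25$ boxes and verify the tiling; for reference, the paper's solution has volume profile $(a,b,c,e)=(2,14,9,0)$, and its nine volume-$9$ boxes include all five of the ``diagonal'' slabs $\{1,2,3\}^2\times\{1\}$, $\{1,2,3\}^2\times\{2\}$, $\{2,4,5\}\times\{1,4,5\}\times\{3\}$, $\{2,3,5\}^2\times\{4\}$, $\{1,2,4\}^2\times\{5\}$, which is the kind of non-brick, coordinate-straddling structure you anticipated.
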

The proof is based on an example which shows that it is possible to partition $[5]^3$ into $25$ odd proper sub-boxes, see Figure~\ref{fig:25oddbox}. We originally found examples with the help of a computer, but the example presented here was found by hand, keeping in mind certain properties of the examples provided by the computer. The solution is not unique.

\begin{figure}[h]
\caption{25 odd boxes partitioning $[5]^3$}
\input{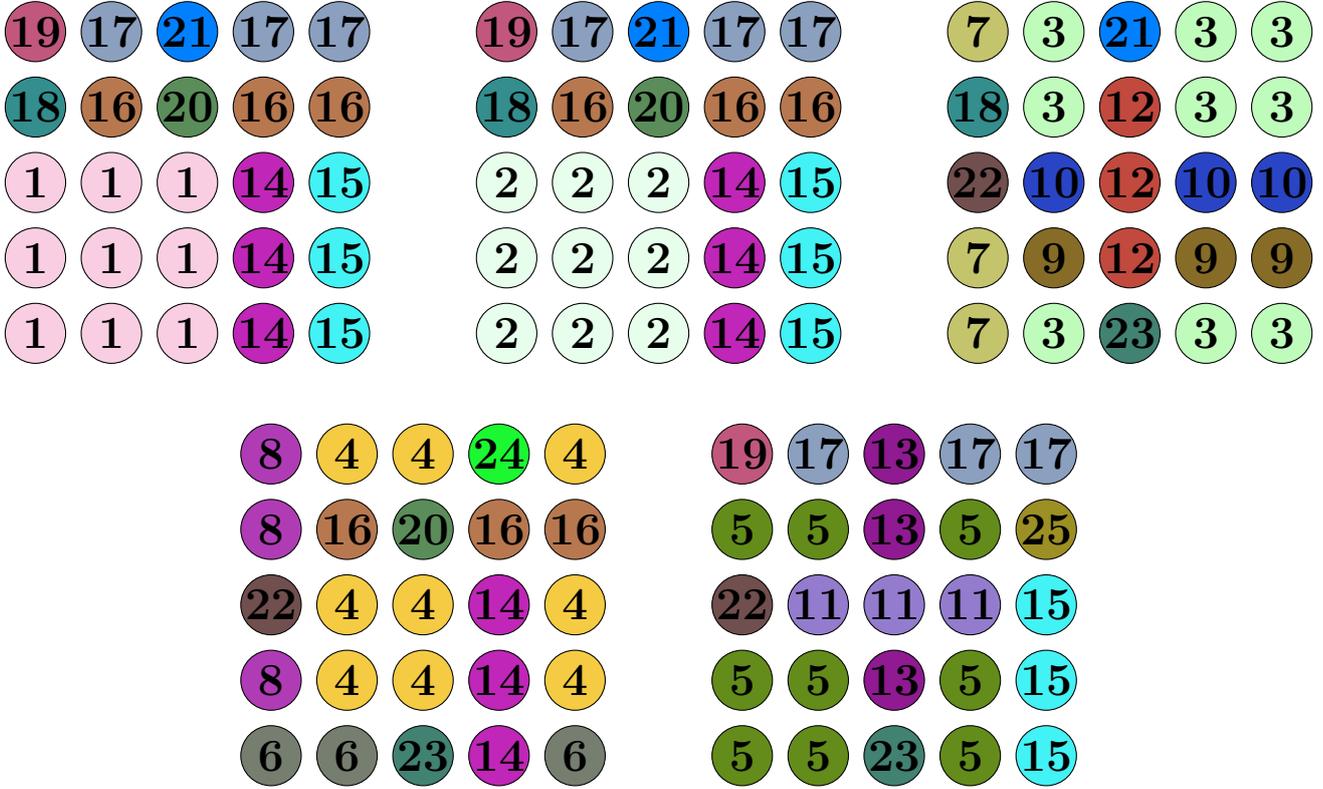}
\centering
\label{fig:25oddbox}
\end{figure}

The situation changes, however if we require the odd boxes in our partition to be products of intervals. Say that the box $B=B_1\times B_2\times\ldots\times B_d$ is a \emph{brick} if for each $i\in\{1,2,\ldots,d\}$ there exist integers $i_0,i_1$ with $i_0\leq i_1$, such that $B_i=\{i_0,i_0+1,\ldots,i_1\}$. As examples, consider the following two boxes:
\begin{itemize}
\item The set $B=\{2,3,4\}\times \{4\}\times \{1,6,7\}$ is an odd proper sub-box of $[7]^3$ but it is not a brick, as $\{1,6,7\}$ does not have the required form.
\item The set $B=\{2,3,4\}\times \{3,4\}$ is a proper brick contained in $[5]^2$. However it is not odd, as $|\{3,4\}|=2$.
\end{itemize}
Our next result shows that the answer to Question~\ref{qu:leader} is `yes' under the additional assumption that the sub-boxes are in fact proper, odd bricks.
\begin{theorem}\label{prop:bricks}
Let $n\geq 2$ be odd, and $d\geq 1$ arbitrary integer. Let $\{B^1,B^2,\ldots,B^m\}$ be a partition of $[n]^d$ into proper, odd bricks. Then $m\geq 3^d$.
\end{theorem}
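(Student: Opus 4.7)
The plan is to use a product-weighting argument. I will construct real numbers $x_1, \ldots, x_n$ satisfying two properties: (a) $\sum_{k=1}^n x_k = 3$, and (b) writing $s(I) := \sum_{k\in I} x_k$, one has $|s(I)| \leq 1$ for every proper odd-length sub-interval $I$ of $[n]$. Granting this, the proof finishes immediately: for any partition $[n]^d = B^1 \sqcup \cdots \sqcup B^m$ into bricks $B^i = I^i_1 \times \cdots \times I^i_d$, evaluating the function $(k_1, \ldots, k_d) \mapsto x_{k_1}\cdots x_{k_d}$ on both sides of the partition yields
$$\sum_{i=1}^m \prod_{j=1}^d s(I^i_j) \;=\; \Bigl(\sum_{k=1}^n x_k\Bigr)^{\!d} \;=\; 3^d.$$
Each $B^i$ being a proper odd brick means every $I^i_j$ is a proper odd-length interval, so by (b) each summand on the left has absolute value at most $1$; the triangle inequality then gives $m \geq 3^d$, as required.

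To build such weights I first note that the natural alternating choice $x_k = (-1)^{k-1}$ already satisfies (b), but its full sum is only $1$, which is insufficient. I boost the total by placing extra $+1$'s near the endpoints: set $x_1 = x_2 = x_{n-1} = x_n = 1$ and $x_k = (-1)^k$ for $3 \leq k \leq n-2$, with the obvious modification $x_1 = x_2 = x_3 = 1$ for the edge case $n = 3$. I verify (a) and (b) simultaneously via the partial sums $S_j := \sum_{k \leq j} x_k$. A short calculation gives $S_0 = 0$, $S_n = 3$, and for each intermediate index $1 \leq j \leq n-1$ one has $S_j = 1$ if $j$ is odd and $S_j = 2$ if $j$ is even. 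An interval $[a,b] \subseteq [n]$ has odd length precisely when $a-1$ and $b$ have opposite parities, and $s([a,b]) = S_b - S_{a-1}$. Among all pairs $(i, j)$ with $0 \leq i < j \leq n$ and $i, j$ of opposite parity, the only one for which $|S_j - S_i| > 1$ is $(0, n)$, which corresponds to the non-proper interval $[n]$. This establishes (b), and $S_n - S_0 = 3$ establishes (a).

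The only creative ingredient is the construction of these weights; once (a) and (b) are in place the rest is a routine tensorisation of a one-dimensional extremal inequality, and I expect no further obstacles. The brick hypothesis is essential to this approach, since only for bricks does each factor $s(I^i_j)$ correspond to a sum over a contiguous interval controlled by partial-sum arithmetic. For general odd sub-boxes no such weighting can exist, as Theorem~\ref{thm:main} already rules out a $3^d$ lower bound in the broader setting of proper odd sub-boxes.
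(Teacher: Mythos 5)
Your proof is correct, and it takes a genuinely different route from the paper. The paper argues by induction on $d$, strengthening the hypothesis to \emph{odd covers} (collections of proper odd bricks covering each point an odd number of times): it peels off the families $\mathcal{C}$ and $\mathcal{D}$ of bricks touching the bottom and top layers, notes these are disjoint because the bricks are proper, applies the induction hypothesis to each of them and to the projections of the remaining bricks onto $[n]^{d-1}$, and sums the three bounds to get $3 \cdot 3^{d-1}$. You instead build an explicit real weight vector $x \in \mathbb{R}^n$ (essentially the alternating vector with the first and last two entries boosted to $+1$) with total mass $3$ whose partial sums take only the values $0,1,2,3$, so that every proper odd-length interval has weight of absolute value exactly $1$; the identity $\sum_i \prod_j s(I^i_j) = 3^d$ for a brick partition together with the triangle inequality then forces $m \ge 3^d$. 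Your computation of the partial sums is right --- since $n$ is odd, $S_j = 1$ for odd $j$ and $S_j = 2$ for even $j$ when $1 \le j \le n-1$, and the only opposite-parity pair $(i,j)$ with $|S_j - S_i| > 1$ is $(0,n)$, ruled out by properness --- and the tensorisation is an instance of the same trick that drives the Alon--Bohman--Holzman--Kleitman proof, with a deterministic weight vector replacing the random set. Your argument is shorter and more self-contained than the paper's; the trade-off is that the paper's induction automatically yields the stronger statement that \emph{any} odd cover by proper odd bricks has size at least $3^d$, which your partition-based double counting does not directly give (the right-hand side is no longer $3^d$ when points are covered with odd multiplicities greater than one).
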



There are a number of natural generalisations of this question. In this paper we shall consider a weakening of the parity constraint. A key property enforced by a partition into odd, proper boxes is that any axis-parallel line through $[n]^d$ intersects at least 3 distinct sub-boxes, with the result that the most obvious construction involves dividing each dimension into 3 parts and taking the resulting $3^d$ sub-boxes. It is therefore natural to pose the following question, which we refer to as the \emph{$k$-piercing} problem.

\begin{question}[$k$-piercing]\label{qu:kpiercingbox}
Let $n\ge k$ and $d\ge 1$ be integers. Let $\{B^1,B^2,\dots, B^m\}$ be a partition of $[n]^d$ into proper boxes with the property that every axis-parallel line intersects at least $k$ distinct $B_i$ (we call this the $k$-piercing property). How small can $m$ be?
\end{question}

This question can obviously be phrased in a continuous setting, replacing $[n]$ with the interval $[0,1]$ and eliminating $n$ altogether. For simplicity we shall not do this, but instead we will generally present bounds on $m$ as a function of $k$ and $d$ only by considering $n$ large enough (for most of our results it is sufficient to take $n>3k$). 

The 2-piercing problem corresponds precisely to the original problem of Kearnes and Kiss, and so the bound $m\ge 2^d$ holds. However, Theorem~\ref{thm:main} tells us that $m< 3^d$ when $k=3$. In fact the easy observation that $3^d$ cannot be a lower bound follows from a simple 2-dimensional construction shown in Figure~\ref{fig:2dimkp}.

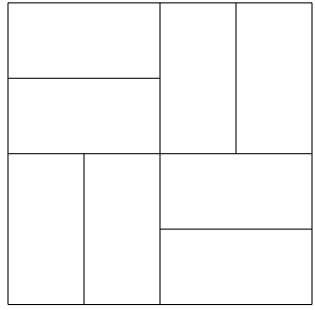
\begin{figure}
\caption{8 bricks in two dimensions satisfying the $3$-piercing property.}
\begin{center}
\begin{tikzpicture}
\draw [-] (0,0) -- (4,0);
\draw [-] (0,0) -- (0,4);
\draw [-] (0,4) -- (4,4);
\draw [-] (4,0) -- (4,4);

\draw [-] (2,0) -- (2,4);
\draw [-] (0,2) -- (4,2);
\draw [-] (1,0) -- (1,2);
\draw [-] (3,2) -- (3,4);
\draw [-] (2,1) -- (4,1);
\draw [-] (0,3) -- (2,3);
\end{tikzpicture}
\end{center}

\label{fig:2dimkp}
\end{figure}

Our later results will concentrate on the $k$-piercing problem. We show, perhaps surprisingly, that $m$ is bounded by $c^dk$ for some $c$ which is independent of $k$. 

\begin{theorem}\label{thm:kpiercingbox}
Let $k \ge 2$ and $d\ge 1$ be integers. For $n$ large enough there exists a partition $\{B^1,\dots,B^m\}$ of $[n]^d$ into proper boxes having the $k$-piercing property with $m\le 15^{d/2}k$.
\end{theorem}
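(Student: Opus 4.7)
My plan is to prove the theorem by explicit construction, inducting on $d$ in steps of $2$. For the base case $d=1$, partition $[n]$ into $k$ consecutive intervals; this gives $m = k \le 15^{1/2}k$. For $d=2$, I would generalise the $3$-piercing construction of Figure~\ref{fig:2dimkp}: divide $[n]^2$ into a $2\times 2$ grid of quadrants, and place $k-1$ parallel horizontal strips in two diagonally opposite quadrants and $k-1$ parallel vertical strips in the other two. Any axis-parallel line crosses exactly two quadrants of perpendicular orientation, meeting $k-1$ strips in the one perpendicular to it and $1$ strip in the one parallel to it, for a total of $k$ distinct bricks. The construction thus has $k$-piercing with $4(k-1) \le 15k$ proper bricks, and $4(k-1) \le 15k$ matches the bound comfortably.

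For the inductive step from $d-2$ to $d$ (with $d \ge 3$), the plan is to take a $2$-dimensional ``gadget'' $Q$ consisting of $15$ proper boxes and combine it with a $(d-2)$-dimensional $k$-piercing partition $P$ of at most $15^{(d-2)/2}k$ boxes (obtained by the inductive hypothesis). A Cartesian product $Q \times P$ yields $15 \cdot |P| \le 15^{d/2}k$ proper boxes in $[n]^d$, and automatically inherits the $k$-piercing property on the $d-2$ axes contributed by $P$. The gadget $Q$ will be chosen so that the axes it contributes also acquire $k$-piercing in the combined construction, which gives the desired bound by straightforward induction.

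The central obstacle is ensuring $k$-piercing on the two new axes introduced by $Q$. A partition of $[n']^2$ into only $15$ proper boxes has piercing at most $14$, so for $k > 14$ a pure Cartesian product cannot achieve $k$-piercing on these axes from $Q$ alone. I expect the proof to design the combination more carefully---for instance, a fibred construction in which the $15$-box pattern varies across cells of $P$, or one in which some boxes are allowed to span several cells of $P$---so that axis-parallel lines in the new axes pass through many different $(d-2)$-dimensional cells and accumulate $k$ distinct $d$-dimensional boxes overall. Turning such a combination into a genuine partition of $[n]^d$ into proper boxes (for $n$ large enough) while simultaneously verifying $k$-piercing in every axis-parallel direction is the hardest part, and will require a careful case analysis of axis-parallel lines according to their direction and position.
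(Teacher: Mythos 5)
Your scaffolding (a recursion in steps of $2$ with factor $15$, base cases $d=1,2$) matches the shape of the paper's argument, but you have correctly identified -- and left unresolved -- the central obstacle, which is where all the work actually lies, so the proposal as written has a genuine gap. The paper's key idea, which your speculation about ``fibred constructions'' and ``boxes spanning several cells'' does not quite reach, is what the paper calls an \emph{intermediate partition}: a partition of $[n]^d$ into bricks $A_1,\ldots,A_m$ together with an assignment to each $A_i$ of a $d$-tuple of \emph{target} piercing numbers $(a_{i,1},\ldots,a_{i,d})$, such that for every axis-parallel line in direction $j$, the \emph{sum} of the targets $a_{i,j}$ over all bricks $A_i$ crossed by that line is at least $k$. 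One then solves the (smaller) piercing sub-problem with those targets inside each brick and concatenates, giving $f_d(k,\ldots,k)\le \sum_i f_d(a_{i,1},\ldots,a_{i,d})$. This is exactly how a line in a ``new'' axis accumulates enough piercing: the targets add up across the gadget pieces it meets, and a piece with target $1$ in a direction costs nothing further there; you do not need any single gadget piece to be $k$-pierced, which is the constraint that made your pure Cartesian-product idea fail for $k>14$.

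The paper then exhibits a concrete $3$-dimensional (not $2$-dimensional) intermediate partition into $15$ bricks with targets of the form $(1,1,k-1,k,\ldots,k)$ (nine bricks) and $(1,1,k-2,k,\ldots,k)$ (six bricks), shown in Figure~\ref{fig:sixthkp}. It is built by stacking three copies of the $2$-dimensional intermediate partition of Figure~\ref{fig:secondkp}, stretching in each layer the piece incident to the corner $Y$ so that across the three layers these stretched pieces realise the observation (Figure~\ref{fig:cover}) that a square can be covered by three \emph{boxes} but not by three bricks. This box-specific covering trick is what makes the factor $15$ possible; without it you get the brick bound $\approx 3.91$ per dimension rather than $\sqrt{15}\approx 3.87$. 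Using $f_d(1,a_2,\ldots,a_d)=f_{d-1}(a_2,\ldots,a_d)$ twice, the intermediate partition gives
\[
f_d(k,\ldots,k)\le 9f_{d-2}(k-1,k,\ldots,k)+6f_{d-2}(k-2,k,\ldots,k)\le 15 f_{d-2}(k,\ldots,k),
\]
and the theorem follows by induction from $f_1(k)=k$ and $f_2(k,k)\le 4(k-1)$. Your proposal correctly guesses both base cases and the factor-$15$ step, but without the intermediate-partition framework and the specific $15$-brick gadget built on the three-boxes-cover-a-square observation, the inductive step does not go through.
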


Recall that the answer to Question~\ref{qu:leader} changes fundamentally when boxes are replaced with bricks, with the trivial construction becoming best possible. In light of this, we also consider the special case of Question \ref{qu:kpiercingbox} when all the boxes are assumed to be bricks. We obtain a similar result, even under this additional restriction.

\begin{theorem}\label{thm:kpiercingbrick}
Let $k \ge 2$ and $d\ge 1$ be integers. For $n$ large enough there exists a partition $\{B^1,\dots,B^m\}$ of $[n]^d$ into proper bricks having the $k$-piercing property with $m\le 3.92^{d}k$.
\end{theorem}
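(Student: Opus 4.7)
The strategy is to build the partition in two stages: a small \emph{base} partition with the $3$-piercing property, wrapped in an outer \emph{boost} structure that raises the piercing to $k$. The factor $3.92^d$ will emerge from the base together with the recursive cost of handling the boundary, while the factor $k$ comes from the linearly many boosting layers.

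\textbf{Base.} The $2$-dimensional $8$-brick pinwheel of Figure~\ref{fig:2dimkp} has the $3$-piercing property. Taking its Cartesian product $\lfloor d/2\rfloor$ times (and multiplying by a $1$-dimensional partition of $[n]$ into three intervals when $d$ is odd) gives a partition of $[N]^d$ into at most $3\cdot 8^{\lfloor(d-1)/2\rfloor}=O((2\sqrt 2)^d)$ proper bricks with $3$-piercing. The $3$-piercing property is preserved under Cartesian products: any axis-parallel line in a product of partitions lies in a slice of a single factor, and therefore inherits that factor's piercing number; properness of bricks is likewise preserved factor-wise.

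\textbf{Boost.} Embed the base as the central block $B=[a+1,n-a]^d$ of $[n]^d$, with $a=\lceil(k-3)/2\rceil$, and partition the outer crust $[n]^d\setminus B$ into $a$ concentric shells, each decomposed into face bricks via the standard pinwheel rule (the face containing a point $x$ in shell $L_j$ is indexed by the smallest coordinate of $x$ attaining the extremal value $j$ or $n+1-j$). An axis-parallel line in direction $i$ whose fixed coordinates all lie in $[a+1,n-a]$ then passes through the base ($3$ bricks) and through the entry and exit faces of each of the $a$ shells in direction $i$ ($2$ bricks per shell), for a total of $3+2a\ge k$ piercings.

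\textbf{Main obstacle.} The difficulty is boundary-hugging lines, those whose fixed coordinates include at least one value outside $[a+1,n-a]$. Such lines bypass the base entirely and, under a naive pinwheel decomposition, pick up as few as one piercing per traversed shell -- far too few. To reach $k$ piercings I would equip each face of each shell with its own $(d-1)$-dimensional $k$-piercing partition, built recursively, so that a line lying in a given face accumulates at least $k$ piercings from that face alone. This leads to a recurrence of the shape $m(d,k)\le \alpha\cdot m(d-1,k)+O((2\sqrt 2)^d)$, whose solution with $m(1,k)=k$ yields $m(d,k)\le 3.92^d\,k$; the constant $3.92$ is chosen so that both the recursive term and the base term fit beneath $3.92^d k$ for every $k\ge 2$ and $d\ge 1$, with the small $d$ cases ($d=1,2$) checked directly from the pinwheel. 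The delicate combinatorial part is matching the recursively chosen $(d-1)$-dimensional face partitions along the shared corners of the shell, so that the overall decomposition remains a genuine partition of $[n]^d$ into proper bricks.
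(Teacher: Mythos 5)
Your construction has a genuine gap at the recurrence step, and it is not a minor one: the onion-shell decomposition you describe cannot yield a bound linear in $k$. You place $a=\lceil(k-3)/2\rceil$ concentric shells around the base, and you acknowledge that boundary-hugging lines force you to give each face of each shell its own $(d-1)$-dimensional $k$-piercing (or at least $(k-2(j-1))$-piercing) partition. But then the crust contributes $\sum_{j=1}^{a} 2d\cdot m(d-1,k-2(j-1))$ bricks. Even assuming inductively that $m(d-1,k')\le C^{d-1}k'$, this sum is of order $d\,C^{d-1}\sum_{j=1}^{a}(k-2j)\approx \tfrac{d}{2}C^{d-1}k^2$, which is quadratic in $k$ and carries a factor $d$. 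The recurrence you write, $m(d,k)\le \alpha\,m(d-1,k)+O((2\sqrt2)^d)$ with $\alpha$ a fixed constant, does not follow from the construction: the correct coefficient is of order $dk$, not a constant, because there are $\Theta(dk)$ shell faces each receiving a fresh recursive partition and these faces are pairwise disjoint so cannot share bricks. Already in $d=2$, four faces per shell times roughly $k/2$ shells, each carrying a $k$-piercing interval partition, gives $\Theta(k^2)$ intervals, not the $4k$ that you would need. So the stated bound $3.92^d k$ is not reached by this route.

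The paper avoids this quadratic blow-up by a quite different device. It defines $f_d(a_1,\dots,a_d)$ as the minimum partition size under the refined piercing condition that a line in direction $i$ must hit at least $a_i$ bricks, and introduces ``intermediate partitions'': a partition of $[n]^d$ together with an assignment of a piercing vector $(a_{i,1},\dots,a_{i,d})$ to each brick $A_i$, such that along every line the assigned values sum to at least $k$. Each brick then hosts a recursive sub-problem of the same dimension but smaller piercing demands, giving $f_d(k,\dots,k)\le\sum_i f_d(a_{i,1},\dots,a_{i,d})$. Crucially, because each coordinate of each sub-problem is either $1$ (which drops a dimension) or something like $k-1,k-2$ (which is essentially $k$ again), one gets inequalities of the form $f_d(k,\dots,k)\le 13\,f_{d-2}(k,\dots,k)+9\,f_{d-3}(k,\dots,k)$ after bootstrapping a good $4$-dimensional intermediate partition via Lemma~\ref{lem:proper-subbox}. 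The recursion then closes with $f_d(k,\dots,k)\le x_0^d k$ where $x_0\approx 3.91$ is the largest root of $x^3-13x-9$, keeping the $k$-dependence linear throughout because the recursion reduces the \emph{dimension}, never multiplying by a number of faces. If you wish to rescue a shell-style construction you would need the crust to contribute only $O(1)$ (not $O(dk)$) recursive $(d-1)$-dimensional sub-partitions, which the disjointness of shell faces makes impossible.
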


Both proofs involve building an intermediate partition coming from a low-dimensional example and then solving a smaller instance of the same problem within each part. It seems almost certain that better examples exist, and in fact it is not out of the question that $m=(2+o(1))^d$ for every fixed $k$, in both regimes.

For the lower bounds, there is a simple inclusion-exclusion argument which shows $m \ge d2^dk,$ but this only applies for bricks. With boxes, lower bounds are difficult to obtain, as neither the argument mentioned above nor the one used to prove Theorem \ref{thm:alon} seem to extend to this problem. In fact, we fail to obtain any lower bound of the form $(1+ \eps)^dk$ for any $\eps>0. $ Such a bound almost certainly holds, and this presents a very interesting open problem. 

In this setting even the $2$-dimensional case is not easy to resolve. The upper bound of $m\le 4k-4$ follows from the left image in Figure~\ref{fig:firstkp} and is easily seen to be tight in the case of bricks. With the aim of showing that this is best possible even for boxes, we introduce a graph theory question of an extremal flavour and solve it asymptotically. This gives the following result.

\begin{proposition}\label{prop:2dim}
Let $\{B^1,\dots,B^m\}$ be a minimal partition of $[n]^2$ into proper sub-boxes satisfying the $k$-piercing property. Then, assuming $n \ge 2k-2$ we have $m=(4+o_k(1))k$. 
\end{proposition}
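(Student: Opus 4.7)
My plan is to establish the bounds $m\le (4+o_k(1))k$ and $m\ge (4-o_k(1))k$ separately. For the upper bound, I would use the nested pinwheel construction alluded to in the text: stacking $k-1$ concentric four-brick pinwheels inside $[n]^2$ yields a partition into $4k-4$ proper bricks (hence boxes) satisfying the $k$-piercing property, giving $m\le 4k-4\le(4+o_k(1))k$.

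For the lower bound my first step is a four-sided inclusion--exclusion. Let $T,B,L,R$ denote the sets of boxes meeting the top row, bottom row, left column and right column of $[n]^2$, so $|T|,|B|,|L|,|R|\ge k$ by $k$-piercing. Because each corner cell lies in a unique box, $|T\cap L|=|T\cap R|=|B\cap L|=|B\cap R|=1$. Writing $s:=|T\cap B|$ for the number of vertically-spanning boxes (those with $\{1,n\}\subseteq A_2$) and $t:=|L\cap R|$ for the horizontally-spanning ones, inclusion--exclusion yields
\[
m\ \ge\ |T\cup B\cup L\cup R|\ \ge\ (|T|+|B|-s)+(|L|+|R|-t)-4\ \ge\ 4k-s-t-4.
\]
Whenever $s+t=o(k)$ this already gives the desired bound, and in the extremal pinwheel construction one has $s=t=0$, confirming that this inequality is tight.

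The substantive case is $s+t=\Omega(k)$; by symmetry, assume $s\gtrsim k$. All vertically-spanning boxes meet row $1$, so their $A_1$-projections are pairwise disjoint and partition some $S\subseteq[n]$ into $s$ nonempty parts. For each column $c\in S$, the vertically-spanning box over it covers only a proper subset of the rows, so the $k$-piercing property forces at least $k-1$ further non-spanning boxes to partition the leftover rows of that column. This is where I would invoke the ``extremal graph theory'' statement advertised in the introduction: encode each non-spanning box as an edge of an auxiliary bipartite graph whose left vertices are the $s$ spanning strips and whose right vertices are the atoms of the induced horizontal piercing partitions, and prove a tight Zarankiewicz-style edge lower bound. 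Combined with the symmetric count in the horizontal direction, this should force the total number of non-spanning boxes to be at least $(4-o(1))k-s-t$, which added to the $s+t$ spanning boxes closes the gap.

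The main obstacle is precisely this extremal estimate: the trivial per-column bound only yields $k-1$ non-spanning boxes, because a single non-spanning box can participate in many columns of $S$ at once. Ruling out such efficient sharing, thereby pinning down the constant $4$, is exactly what the reformulated graph-theoretic problem is designed to do, and the entire proof of the lower bound hinges on solving it asymptotically.
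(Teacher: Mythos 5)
Your upper bound via the nested pinwheel ($4k-4$ proper bricks) is correct and matches the paper's construction in the left image of Figure~\ref{fig:firstkp}. For the lower bound, however, the substantive case $s+t=\Omega(k)$ has a genuine gap. The inclusion--exclusion step $m\ge 4k-s-t-4$ is sound but vacuous when $s$ or $t$ is large, and the ``Zarankiewicz-style edge lower bound'' you invoke to handle that regime is neither stated precisely nor proven. Worse, the proposed bipartite encoding is not well-defined: a non-spanning box can intersect several of the $s$ disjoint vertical strips simultaneously (its first-coordinate set may meet many of their pairwise-disjoint projections), so it does not correspond to a single edge between a left vertex (a strip) and a right vertex, and the right vertex set (``atoms of the induced horizontal piercing partitions'') is left vague. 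As you yourself acknowledge, the entire lower bound hinges on this unproved estimate, so the argument does not close.

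The paper avoids the case split altogether with a cleaner reduction, applied uniformly to all $m$ boxes rather than only the spanning ones. Build a graph with one vertex per box and colour the pair $\{B^i,B^j\}$ red if some vertical line meets both and blue if some horizontal line meets both; disjointness of the boxes forbids both colours on one pair (a common column and a common row would force a common point). The $k$-piercing condition then forces every vertex into a red $K_k$ and a blue $K_k$, which is exactly Question~\ref{qu:graphqn}. Proposition~\ref{prop:graph} solves that question asymptotically by repeatedly peeling off the largest red and blue cliques and tracking the degrees forced on the removed vertices, yielding $m\ge (4-o_k(1))k$ without ever distinguishing spanning from non-spanning boxes. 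That reduction and the iterative clique-peeling argument are the ideas your proposal is missing.
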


This short paper is organized as follows. In Section~\ref{sec:setup} we give some set-up and preliminary observations. In Section~\ref{sec:proofs} we prove Theorem~\ref{thm:main} and Theorem~\ref{prop:bricks}. In Section~\ref{sec:piercing} we consider the $k$-piercing problem and present our results, including Theorem~\ref{thm:kpiercingbox}, Theorem \ref{thm:kpiercingbrick} and Proposition~\ref{prop:2dim}. A selection of open questions are given in Section~\ref{sec:concl}.

Before beginning with the set-up for our investigations, we draw attention to other variants of the problem which have been considered in the literature, including geometrical results concerning the minimal partitions obtained in Theorem~\ref{thm:alon}~\cite{Krzysztof2} and extensions of these ideas in the context of cube tiling~\cite{Krzysztof}.

\section{Set-up and previous results}\label{sec:setup}

We begin this section by giving the proof of Alon, Bohman, Holzman and Kleitman of Theorem~\ref{thm:alon}, as presented in~\cite{saks}.
\begin{proof}[Proof of Theorem~\ref{thm:alon}]
Let $A=A_1\times A_2\times\ldots\times A_d$ be a $d$-dimensional discrete box and let $\{B^1,B^2,\ldots,B^m\}$ be a partition of $A$ into proper sub-boxes, where $B^j=B^j_1\times B^j_2\times\ldots\times B^j_d$ for all $j$.  Select sets $R_i$, $i\in\{1,2,\ldots,d\}$, independently, uniformly at random amongst all odd-sized subsets of $A_i$, and let $R:=R_1\times R_2\times\ldots\times R_d$. 

For $j\in\{1,2,\ldots,m\}$, let $X_j$ be the indicator function of the event that $|B^j\cap R|$ is odd, and set $X=\sum_{j=1}^m X_j$. Then we have that the expectation of $X_j$ satisfies
$$\mathbb{E}(X_j)=\mathbb{P}\left(|B^j\cap R|\text{ is odd}\right) = \prod_{i=1}^d\mathbb{P}\left(|B^j_i\cap R_i|\text{ is odd}\right)=2^{-d},$$where we have used the observation that half of the odd cardinality subsets of $A_i$ intersect $B^j_i$ in an odd number of elements. By linearity of expectation we have $\mathbb{E}(X)=m2^{-d}$. Note also that
$$X\equiv \sum_j X_j \equiv \sum_j |B^j\cap R|\equiv |R|\equiv 1 \text{ mod }2.$$ Hence $X\geq 1$ with probability $1$, implying that $\mathbb{E}(X)\geq 1$ and so $m\geq 2^d$ as claimed.
\end{proof}

Let $\fo(n,d)$ denote the number of odd proper sub-boxes required to partition the box $[n]^d$. Note that it is easily seen from Theorem~\ref{thm:alon} that whenever $n\geq 2$ is even we have $\fo(n,d) = 2^d$. Hence we will always assume that the first argument of $\fo$ is odd. Using this notation, Theorem~\ref{thm:main} simply states that if $d\geq 3$ is divisible by $3$ then $\fo(5,d)\leq 25^{d/3}$.

Note first that if $m\geq n$ are odd integers, and $\mathcal{B}$ is a partition of $[n]^d$ into odd proper sub-boxes, then one can obtain a partition of $[m]^d$ into $|\mathcal{B}|$ odd proper sub-boxes by identifying the element $\{n\}$ with the interval $\{n,n+1,\ldots,m\}$. Hence if $2<n\leq m$ are odd integers and $d\geq 1$ then 
\begin{equation}\label{eq:monotone}
\fo(n,d)\geq \fo(m,d).
\end{equation}

Note that if $\mathcal{B}_1$ and $\mathcal{B}_2$ are partitions of $[n]^d_1$ and $[n]^d_2$ respectively into odd boxes, then $\mathcal{B}_1\times \mathcal{B}_2$ is a partition of $[n]^{d_1+d_2}$ into $|\mathcal{B}_1|\cdot |\mathcal{B}_2|$ odd boxes. Hence the function $\fo$ satisfies 
\begin{equation}\label{eq:submult}
\fo(n,d_1+d_2)\leq\fo(n,d_1)\cdot \fo(n,d_2)
\end{equation} for all $n\geq 2$ and $d_1,d_2\geq 1$. Since by Theorem~\ref{thm:alon} we have that $\fo(n,d)\geq 2^d$ for all $n,d$, Fekete's lemma~\cite{fekete} can be applied. It follows that for every $n\geq 2$, there exists a nonnegative constant $\alpha_n$ depending only on $n$, such that $\fo(n,d)=\left(\alpha_n+o_d(1)\right)^d$, where $o_d(1)\rightarrow 0$ as $d\rightarrow \infty$.

By inequality~\eqref{eq:monotone} the sequence $(\alpha_n)_{n\in\mathbb{N}}$ is monotone decreasing. An interesting open question is whether the limit of the sequence on the odd integers is equal to two or not -- see Section~\ref{sec:concl} for more details.

Note that these considerations apply equally to the $k$-piercing problem, showing that for fixed $k$ the minimum number of boxes in a partition with the $k$-piercing property is at least $(\beta_{k,n}+o_{d}(1))^d$ for some monotone decreasing sequence $(\beta_{k,n})_{n\in \mathbb{N}}$. Letting $\beta_k=\lim_{n\to \infty}\beta_{k,n}$, Theorem~\ref{thm:kpiercingbox} shows that $\beta_k\le 15^{1/2}$ for all $k$. Similarly, one can define $\gamma_k$ for the case of bricks, in which case Theorem~\ref{thm:kpiercingbrick} implies $\gamma_k \le 3.92.$

Let us denote by $p_{\text{box}}(n,d,k)$ the answer to Question~\ref{qu:kpiercingbox} 
and by $p_{\text{brick}}(n,d,k)$ the answer to the same question, but restricted to bricks. Let $p_{\text{box}}(d,k)=\lim_{n\rightarrow\infty}p_{\text{box}}(n,d,k)$ and $p_{\text{brick}}(d,k)=\lim_{n\rightarrow\infty}p_{\text{brick}}(n,d,k)$, which both exist by the above observations. 
As any brick is a box, we know that $p_{\text{box}}(d,k) \le p_{\text{brick}}(d,k).$  Note that with the above definitions $p_{\text{brick}}(d,k)=(\beta_k+o_d(1))^d$ and $p_{\text{box}}(d,k)=(\gamma_k+o_d(1))^d.$ 

The case of $k=2$ is resolved completely by Theorem~\ref{thm:alon} as there is a trivial partition into $2^d$ bricks, by splitting the original box into two along each dimension, implying $p_{\text{brick}}(d,2) \le 2^d $. On the other hand, a partition being $2$-piercing is equivalent to it consisting only of proper boxes, so Theorem~\ref{thm:alon} implies that $2^d \le p_{\text{box}}(d,2).$ In particular, this implies a very surprising result that for $k=2$ the answer is the same for boxes and bricks: $p_{\text{box}}(d,2) = p_{\text{brick}}(d,2) =2^d.$

\section{Partitioning into odd boxes}\label{sec:proofs}

We start with proving the upper bound, given by Theorem~\ref{thm:main}. 
\begin{proof}[Proof of Theorem~\ref{thm:main}]
Note that by inequality~\eqref{eq:submult}, it suffices to show that $\fo(5,3)\leq 25$. That is, we seek a partition of $[5]^3$ into $25$ proper odd boxes. This partition can be seen on Figure~\ref{fig:25oddbox}. The list of the coordinates of the $25$ boxes can be found in the appendix.

This solution was found by phrasing the problem as an integer program, with one (Boolean) variable for every possible odd sub-box, and one constraint per coordinate saying that the sum of variables that correspond to boxes which contain this point is one. We then used Gurobi~\cite{gurobi}, a commercially available solver, to find the counterexample.
\end{proof}

We now turn to lower bounds, starting with the easy observation that for each fixed $n$ we have $\alpha_n>2$.

\begin{proposition}\label{prop:oddlower}
Let $n>2$ be odd, and $d\geq 1$. Then we have the lower bound $$\fo(n,d)\geq \left(2+\frac{1}{2^{n-2}-1}\right)^d.$$
\end{proposition}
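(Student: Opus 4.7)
The plan is to refine the Alon--Bohman--Holzman--Kleitman argument from the proof of Theorem~\ref{thm:alon} by biasing the distribution on the random sets $R_i$ in order to exploit the oddness of the sides of the boxes $B^j$. In the original proof, each $R_i$ was chosen uniformly among all odd-sized subsets of $[n]$, and the probability that $|B^j_i\cap R_i|$ is odd was exactly $\tfrac12$ for any non-empty proper $B^j_i$. The key observation I would make is that, when $|B^j_i|$ is itself odd, one of the odd-sized subsets of $[n]$ has \emph{forced} odd intersection with $B^j_i$, namely the full set $R_i=[n]$ (since then $|B^j_i\cap R_i|=|B^j_i|$ is odd). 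If we delete this ``unavoidable'' outcome from the sample space, the proportion of odd intersections drops strictly below $\tfrac12$ by a small but explicit amount, and this is precisely what we need.

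Concretely, I would let $R_i$ be chosen independently and uniformly at random from the collection of odd-sized subsets of $[n]$ \emph{other than} $[n]$ itself, and set $R=R_1\times\cdots\times R_d$. There are $2^{n-1}-1$ such subsets, and since $|R_i|$ is always odd, $|R|$ is odd as well, so the parity step of the original argument still yields
\[
X \;:=\; \sum_{j=1}^m \mathbbm{1}\!\left[|B^j\cap R|\text{ odd}\right] \;\equiv\; |R|\;\equiv\;1\pmod 2,
\]
and hence $X\ge 1$ deterministically, giving $\mathbb{E}(X)\ge 1$.

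Next I would compute, for a fixed odd proper subset $B\subsetneq[n]$, the probability that $|B\cap R_i|$ is odd. A short double-counting by splitting $R_i$ as $X\cup Y$ with $X\subseteq B$, $Y\subseteq[n]\setminus B$ and requiring $|X|$ odd and $|Y|$ even shows that exactly $2^{|B|-1}\cdot 2^{n-|B|-1}=2^{n-2}$ of the $2^{n-1}$ odd-sized subsets of $[n]$ intersect $B$ in an odd number of elements; the subset $[n]$ is one of them, so after excluding it we are left with $2^{n-2}-1$ good outcomes out of $2^{n-1}-1$. Thus
\[
\mathbb{P}\bigl(|B\cap R_i|\text{ odd}\bigr)\;=\;\frac{2^{n-2}-1}{2^{n-1}-1}.
\]
By independence of the coordinates, $\mathbb{E}(X)=m\bigl(\tfrac{2^{n-2}-1}{2^{n-1}-1}\bigr)^d$, and combining with $\mathbb{E}(X)\ge 1$ gives
\[
\fo(n,d)\;\ge\;\left(\frac{2^{n-1}-1}{2^{n-2}-1}\right)^{d}\;=\;\left(2+\frac{1}{2^{n-2}-1}\right)^{d},
\]
as desired. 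There is no real obstacle here beyond correctly identifying which outcome to discard; the only care needed is to verify that the discarded set is the \emph{unique} odd-sized subset whose intersection with every odd proper $B$ is forced to be odd, which is what guarantees that the gain from $\tfrac12$ to $\tfrac{2^{n-2}-1}{2^{n-1}-1}$ applies uniformly across all boxes in the partition.
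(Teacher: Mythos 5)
Your proposal is correct and is essentially identical to the paper's proof: the paper also replaces the uniform choice over all odd-sized subsets of $[n]$ with a uniform choice over \emph{proper} odd-sized subsets, computes the per-coordinate probability as $\tfrac{2^{n-2}-1}{2^{n-1}-1}$, and concludes via $\mathbb{E}(X)\ge 1$. Your closing remark about the ``uniqueness'' of the discarded set is harmless but unnecessary --- the direct count of favorable outcomes already gives the stated probability for every proper odd $B$, with no need to verify uniqueness.
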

\begin{proof}
The proof of Proposition~\ref{prop:oddlower} is a trivial modification of the proof of Alon, Bohman, Holzman and Kleitman of Theorem~\ref{thm:alon}. We simply take the sets $R_i$ to be uniformly chosen at random amongst \emph{proper}, odd-sized subsets of $[n]$. That is, $R_i$ is a uniformly random element of the set $\{S\subset A : S\neq A \text{ and }|S| \text{ is odd}\}$. Define $X_j, X$ and $R$ as in the proof of Theorem~\ref{thm:alon} and note that
$$\mathbb{E}(X_j)=\mathbb{P}\left(|B^j\cap R|\text{ is odd}\right)=\left(\frac{2^{n-2}-1}{2^{n-1}-1}\right)^d.$$ As before we have that $X\geq 1$ with probability $1$, hence $\mathbb{E}(X)=m\mathbb{E}(X_j)\geq 1$. After rearranging, this gives the required result. 
\end{proof}

Note that Proposition~\ref{prop:oddlower} simply says that $\alpha_n\geq 2+\frac{1}{2^{n-2}-1}$ for all odd $n$, but this sequence of lower bounds on the $\alpha_n$-s converges to two. 

We will now consider the case where the members of our partition are proper, odd bricks. The idea behind the proof of Theorem~\ref{prop:bricks} is to remove the `top' and `bottom' layers of a partition and prove that the number of remaining bricks has to be large, since their projection onto the first $d-1$ layers forms a partition of a $d-1$-dimensional odd box. While this is not quite true, this proof method can be made to work by considering a stronger induction hypothesis.

\begin{proof}[Proof of Proposition~\ref{prop:bricks}]
Let $n\geq 2$ be odd, $d\geq 1$ arbitrary integer. We will prove the stronger claim that if $\mathcal{B}=\{B^1,B^2,\ldots,B^m\}$ is a set of odd proper bricks that cover every element of $[n]^d$ an odd number of times, then $m\geq 3^d$. The proof goes by induction on $d$.

Let $n,d,\mathcal{B}$ be given. For any brick $B \in \mathcal{B}$ let $B=B_1 \times \cdots \times B_d$ where $B_i$ are odd length intervals. Let $\mathcal{C},\mathcal{D}\subset\mathcal{B}$ be defined as
$$\mathcal{C}=\left\{B^i : B^i\cap\left( \underbrace{[n] \times [n] \times \ldots \times [n]}_{d-1}\times \{1\}\right)\neq \emptyset\right\},$$
and
$$\mathcal{D}=\left\{B^i : B^i\cap\left( \underbrace{[n] \times [n] \times \ldots \times [n]}_{d-1}\times \{n\}\right)\neq \emptyset\right\}.$$
Note that $\mathcal{C}\cap \mathcal{D}=\emptyset$, as all $B^i$-s are proper bricks. Moreover, as elements of $\mathcal{C}$ cover every point of $[n]^{d-1}\times\{1\}$ an odd number of times, by induction we have $|\mathcal{C}|\geq 3^{d-1}$, and similarly $|\mathcal{D}|\geq 3^{d-1}$. Remains to show that $|\mathcal{B}\setminus (\mathcal{C}\cup\mathcal{D})|\geq 3^{d-1}$.

For every point $(i_1,i_2,\ldots,i_d)\in [n]^d$ and any family of bricks $\mathcal{E}$, denote by $x_{i_1,i_2,\ldots,i_d}(\mathcal{E})$ the number of bricks in $\mathcal{E}$ that contain $\{i_1\}\times\{i_2\}\times\ldots\times\{i_d\}$, and note that by assumptions $x_{i_1,i_2,\ldots,i_d}(\mathcal{B})$ is odd for all choices of the $i_j$-s. 

For all $(i_1,i_2,\ldots,i_{d-1})\in[n]^{d-1}$ define the quantity
$$y_{i_1,i_2,\ldots,i_{d-1}}=\sum_{j=1}^n x_{i_1,i_2,\ldots,i_{d-1},j}(\mathcal{B}\setminus (\mathcal{C}\cup \mathcal{D})),$$
and note that $y_{i_1,i_2,\ldots,i_{d-1}}$ is odd for all choices of  $i_1,\ldots,i_{d-1}$. Indeed, as $\mathcal{C}\cap \mathcal{D}=\emptyset$ 
\begin{align*}
y_{i_1,i_2,\ldots,i_{d-1}} 
& =\sum_{j=1}^n x_{i_1,i_2,\ldots,i_{d-1},j}(\mathcal{B})-\sum_{j=1}^n x_{i_1,i_2,\ldots,i_{d-1},j}(\mathcal{C})-\sum_{j=1}^n x_{i_1,i_2,\ldots,i_{d-1},j}(\mathcal{D})\\
& =\sum_{j=1}^n x_{i_1,i_2,\ldots,i_{d-1},j}(\mathcal{B})-\sum_{C \in \mathcal{C}}\sum_{j=1}^n \mathbbm{1} ((i_1,i_2,\ldots,i_{d-1},j) \in \mathcal{C})-\sum_{D \in \mathcal{D}}\sum_{j=1}^n \mathbbm{1} ((i_1,i_2,\ldots,i_{d-1},j) \in \mathcal{D}) \\
& = \sum_{j=1}^n x_{i_1,i_2,\ldots,i_{d-1},j}(\mathcal{B})-\sum_{C \in \mathcal{C}}|C_d|-\sum_{D \in \mathcal{D}}|D_d|,
\end{align*}
where $\mathbbm{1}(\cdot)$ denotes the indicator function of an event.

Now as $C_d,D_d$ are odd size intervals each term in the above sums is odd, so the total residue mod $2$ is $n-|\mathcal{C}|-|\mathcal{D}|,$ which is odd.

Consider the projection of the bricks in $\mathcal{B}\setminus(\mathcal{C}\cup\mathcal{D})$ onto the first $d-1$ coordinates and note that it induces an odd cover of a $d-1$ dimensional odd cube, as follows. For any brick $B\in \mathcal{B}\setminus(\mathcal{C}\cup\mathcal{D})$ define $\pi(B):= B_1\times B_2\times \ldots \times B_{d-1}$ to be the projection of the box $B$ onto the first $d-1$ coordinates. For all $(i_1,i_2,\ldots,i_{d-1})\in [n]^{d-1}$ define the quantity
$$z_{i_1,i_2,\ldots,i_{d-1}}=\sum_{B\in\mathcal{B}\setminus(\mathcal{C}\cup\mathcal{D})}\mathbbm{1}\left((i_1, i_2,\ldots,i_{d-1})\in \pi(B)\right).$$ 
Observe that 
$$z_{i_1,i_2,\ldots,i_{d-1}}\equiv y_{i_1,i_2,\ldots,i_{d-1}} \text{ mod }2$$
for all choices of coordinates, and hence all $z_{i_1,i_2,\ldots,i_{d-1}}$-s are odd. Since the set of bricks 
$$\{\pi(B): B\in\mathcal{B}\setminus(\mathcal{C}\cup\mathcal{D})\}$$
form a cover of $[n]^{d-1}$ with each point covered $z_{i_1,i_2,\ldots,i_{d-1}}$ times, it follows by induction that $|\mathcal{B}\setminus(\mathcal{C}\cup\mathcal{D})|\geq 3^{d-1}$ and the proof is complete.
\end{proof}

\section{Piercing}\label{sec:piercing}
In this section we will consider piercing problems related to Question~\ref{qu:kpiercingbox}. We start by giving some simple bounds, derived by generalising the arguments used for $k=2,$ which illustrate various difficulties that arise. In the following subsections we give various improvements to these bounds. 

In the case of bricks, observe that a single brick of the partition that does not contain a corner vertex can be incident to only one edge of the original cube, as otherwise it would not be proper and thus fail the $k$-piercing property (even for $k=2$). Also, for each edge there needs to be at least $k$ boxes which are incident to it. Combining these two observations we deduce that there needs to be at least $d2^{d-1}(k-2)$ different non-corner boxes, as there are $d2^{d-1}$ edges. Including the additional $2^d$ corner boxes this implies that there are at least $d2^{d-1}(k-2)+2^d$ different boxes. 
On the other hand, generalising the partition used for $k=2$, splitting the original cube into $k$ parts along each dimension obtains a $k$-piercing partition into $k^d$ bricks. So we have shown the following two easy bounds:
\begin{equation} \label{eq:brick-trivial}
d2^{d-1}(k-2)+2^d \le p_{\text{brick}}(d,k) \le k^d.    
\end{equation}

In the case of boxes, the lower bound no longer applies, as almost all the bricks counted as different above might become parts of a single box. The same kind of argument only gives $p_{\text{box}}(d,k) \ge d(k-1)+1$ by fixing a corner and counting all the boxes incident to an edge containing this corner, which need to be different. Furthermore, it is not clear how to exploit the $k$-piercing property in the argument used in Theorem~\ref{thm:alon} for $k>2$. However Theorem~\ref{thm:alon} is directly applicable in the case $k=2$, which gives a lower bound of $2^d$ which then holds for all $k\ge 2$. From the other direction, it is also not clear how one could exploit the freedom afforded by using boxes instead of bricks when trying to find a partition, and in fact when $k=2$ this turns out not to be possible. We can, however, reuse the bound for bricks to obtain the following simple bounds:
\begin{equation}\label{eq:box-trivial}
\max (k(d-1)+1,2^d) \le p_{\text{box}}(d,k) \le k^d.
\end{equation}

Note that the lower bound for $p_{\text{box}}$ highlights a disconnect between our methods for dealing with the two most extreme regimes: firstly the case of $k$ fixed and $d\to \infty$ in which the lower bound is $2^d$, and secondly the case of $d$ fixed and $k\to\infty$ in which the bound of $k(d-1)+1$ is relevant. We shall give our results in terms of both $k$ and $d$ so that they apply generally, and indeed the upper bounds we shall describe are the best we know across all regimes. Our lower bound efforts, however, are most relevant for the latter scenario (when $d$ is small compared to $k$). 

In the following subsections we will describe our various improvements to the above bounds. In the first subsection we will discuss upper bounds on $p_{\text{brick}}(d,k)$ and $p_{\text{box}}(d,k)$ and in the second subsection we discuss lower bounds.

\subsection{Upper bounds for the $k$-piercing problem}
In this section we will present the proof of Theorems~\ref{thm:kpiercingbox} and \ref{thm:kpiercingbrick}, giving a major improvement over the upper bound in \eqref{eq:brick-trivial} and \eqref{eq:box-trivial}. We begin by presenting a simple partition into at most $4^dk$ bricks that satisfies the $k$-piercing property. This construction is so simple and natural that one might imagine that it could be best possible. This is not the case, however, and we will go on to present two different approaches for obtaining improvements in the base of the exponent, one of which is specific for boxes and gives a slightly better bound.

We define $f_d(a_1,\dots,a_d)$ to be the minimum size of a partition of $[n]^d$ into boxes so that every line in dimension $i$ hits at least $a_i$ boxes, (we refer to this as the $(a_1,\dots,a_d)$-piercing condition). In the first two dimensions, we split $[n]^d$ into 4 quadrants. In the top left and bottom right quadrants we place a construction satisfying the $(1,k-1,k,\dots,k)$-piercing condition. In the bottom left and top right quadrants we place a construction satisfying the $(k-1,1,k,\dots,k)$-piercing condition. This is shown in Figure~\ref{fig:firstkp}. This gives a construction satisfying the $k$-piercing condition, and observing that $f_d(1,k,\dots,k)\le f_{d-1}(k-1,k,\dots,k) \le f_{d-1}(k,k,\dots,k)$ gives the following bound for $d\ge 2$:
\[f_d(k,\dots,k)\le 4f_{d-1}(k,\dots,k).\]
Combining this with the fact that $f_1(k)=k$ we find that $f_d(k,\dots,k)\le 4^d k$.

\begin{figure}
\caption{On the left we see a $k$-piercing configuration in two dimensions with $4(k-1)$ bricks. On the right, we use this idea to give a $k$-piercing construction with $k4^d$ boxes. In the first two dimensions we divide the cube into quadrants and then place optimal constructions in each quadrant satisfying the piercing conditions shown.
}
\begin{center}
\begin{tikzpicture}
\draw [-] (0,0) -- (6,0);
\draw [-] (0,0) -- (0,6);
\draw [-] (0,6) -- (6,6);
\draw [-] (6,0) -- (6,6);

\draw [-] (8,0) -- (14,0);
\draw [-] (8,0) -- (8,6);
\draw [-] (8,6) -- (14,6);
\draw [-] (14,0) -- (14,6);

\draw [very thin] (9.5,1.5) node[] {\scalebox{0.8}{$\Spvek{1;k-1;k;\vdots;k}$}};
\draw [very thin] (9.5,4.5) node[] {\scalebox{0.8}{$\Spvek{k-1;1;k;\vdots;k}$}};
\draw [very thin] (12.5,4.5) node[] {\scalebox{0.8}{$\Spvek{1;k-1;k;\vdots;k}$}};
\draw [very thin] (12.5,1.5) node[] {\scalebox{0.8}{$\Spvek{k-1;1;k;\vdots;k}$}};

\draw [very thin] (3,7.5) node[] {a) \,\, $p_{\text{brick}}(2,k)\le 4(k-1)$};
\draw [very thin] (11,7.5) node[] {b) \,\, $p_{\text{brick}}(d,k)\le 4p_{\text{brick}}(d-1,k)$};

\draw [-] (11,0) -- (11,6);
\draw [-] (8,3) -- (14,3);
\draw [-] (3,0) -- (3,6);
\draw [-] (0,3) -- (6,3);
\draw [-] (0,0.5) -- (3,0.5);
\draw [-] (0,1) -- (3,1);
\draw [-] (0,2.5) -- (3,2.5);

\draw [very thin] (1.5,1.85) node[] {\scalebox{1.5}{\vdots}};

\draw [-] (3.5,0) -- (3.5,3);
\draw [-] (4,0) -- (4,3);
\draw [-] (5.5,0) -- (5.5,3);

\draw [very thin] (4.85,1.5) node[] {\scalebox{1.5}{\dots}};

\draw [-] (0.5,3) -- (0.5,6);
\draw [-] (1,3) -- (1,6);
\draw [-] (2.5,3) -- (2.5,6);

\draw [very thin] (1.85,4.5) node[] {\scalebox{1.5}{\dots}};

\draw [-] (3,3.5) -- (6,3.5);
\draw [-] (3,4) -- (6,4);
\draw [-] (3,5.5) -- (6,5.5);

\draw [very thin] (4.5,4.85) node[] {\scalebox{1.5}{\vdots}};

\draw [decorate,decoration={brace,amplitude=10pt,raise=4pt},yshift=0pt]
(0,0) -- (0,3) node [black,midway,xshift=-1cm] {\footnotesize
$k-1$};

\draw [decorate,decoration={brace,amplitude=10pt,raise=4pt},yshift=0pt]
(6,0) -- (3,0) node [black,midway,yshift=-0.8cm] {\footnotesize
$k-1$};

\draw [decorate,decoration={brace,amplitude=10pt,raise=4pt},yshift=0pt]
(6,6) -- (6,3) node [black,midway,xshift=1cm] {\footnotesize
$k-1$};

\draw [decorate,decoration={brace,amplitude=10pt,raise=4pt},yshift=0pt]
(0,6) -- (3,6) node [black,midway,yshift=0.8cm] {\footnotesize
$k-1$};

\end{tikzpicture}
\end{center}
\label{fig:firstkp}
\end{figure}
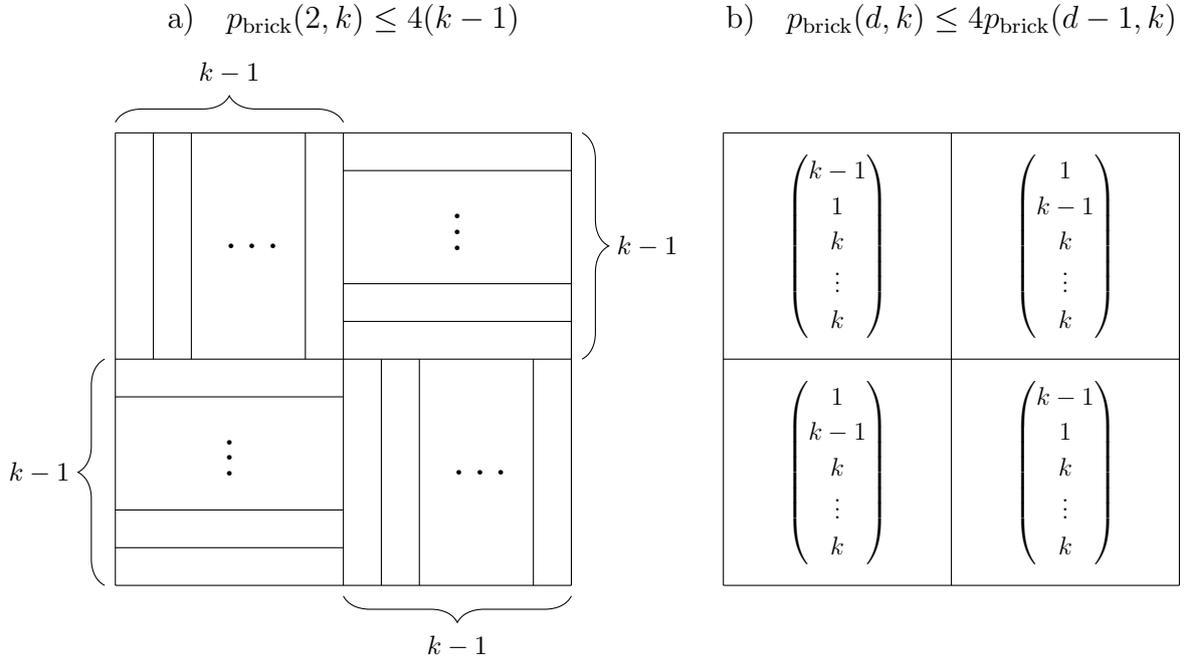

In particular this shows:
\begin{equation}\label{eq:4tothed}
p_{\text{box}}(d,k) \le p_{\text{brick}}(d,k) \le 4^dk     
\end{equation}
So, in the notation introduced in Section~\ref{sec:setup}, we have $\gamma_k \le \beta_k \le 4.$ 

One may wonder if these bounds are tight, and the construction describes above is essentially best possible (at least in the case of bricks). We will now show that this is not the case, and give two different approaches for improving the base of the exponent further. In both following subsections we will reuse the general idea of splitting the cube along a couple of dimensions. In the following subsection we work with bricks and prove Theorem~\ref{thm:kpiercingbrick} and in the subsequent subsection we exploit a simple observation which holds for boxes but not for bricks to get an even better bound.

\subsubsection{Bricks}
In some sense a more surprising part of the result \eqref{eq:4tothed} is the fact that for a fixed dimension $d$ both $p_{\text{box}}(d,k)$ and $p_{\text{brick}}(d,k)$ are linear in $k$, but using the sub-multiplicative inequalities such as \eqref{eq:submult} can never give results linear in $k.$ The idea of finding a small example and then using these inequalities as was done in the previous section for $f_{odd}$ can only ever give something interesting when $k$ is rather small. However, the idea behind the argument giving \eqref{eq:4tothed} is to use small examples in a different manner. The following observation gives a more general view of this idea.

Given a partition of $[n]^d$ into bricks $A_1,\ldots, A_m$ such that we can assign to each $A_i$ a $d$-tuple $(a_{i,1}\ldots,a_{i,d})$ of positive integers such that for any line in $j$-th dimension the sum of $a_{i,j},$ with $i$ ranging over the bricks crossed by this line, is at least $k.$ Whenever we have such a partition we obtain that $f_d(k,\ldots,k) \le \sum_{i=1}^m f_d(a_{i,1},\ldots,a_{i,d})$ as we can solve the corresponding subproblem within each brick of the partition. We will call such a partition \emph{intermediate}.

The natural goal is to find small examples of intermediate partitions. For example, given a $k$-piercing example for small $d,$ if we can group several bricks into sets $A_i$ to obtain an intermediate partition then we obtain an upper bound on $f_d(k,\ldots,k).$ For instance, in the proof of \eqref{eq:4tothed}, we used the example on the left of Figure~\ref{fig:firstkp} which gives a natural grouping into $4$ bricks, yielding the intermediate example on the right of this figure.

The following lemma gives a way of obtaining, from an intermediate partition in $d$ dimensions, a new intermediate partition in $d+1$ dimensions in a slightly better way than the trivial approach of stacking two copies on top of one another. 
\begin{lemma}\label{lem:proper-subbox}
Let $A_1,\ldots,A_m$ be an intermediate partition of $[n]^d.$ Let $X$ and $Y$ be corners of the cube such that the largest proper sub-brick containing $X$ covers w.l.o.g. $A_1, \ldots, A_s$ and let $A_r$ be the brick containing corner $Y.$ Then
\begin{align*}
    f_{d+1}(k,\ldots,k)\le & \sum_{i=1}^s f_{d+1}(a_{i,1}\ldots,a_{i,d},1)+\sum_{i=s+1}^m f_{d+1}(a_{i,1}\ldots,a_{i,d},k-1)+\\ 
    & \sum_{i=1,i \neq r}^mf_{d+1}(a_{i,1}\ldots,a_{i,d},1)+f_{d+1}(a_{r,1}\ldots,a_{r,d},k-1).
\end{align*}

\end{lemma}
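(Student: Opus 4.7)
The plan is to construct an intermediate partition of $[n]^{d+1}$ whose refinement cost matches the RHS, by stacking two asymmetrically-labeled copies of the given intermediate partition along dimension $d+1$. First I would split $[n]^{d+1}$ into two slabs $L_1 = [n]^d \times I_1$ and $L_2 = [n]^d \times I_2$ along the new dimension, and in each slab place a copy of $\{A_i\}$, producing $m$ tubes $A_i \times I_j$ per slab and $2m$ in total.

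Next, I would assign dimension-$(d+1)$ piercing values asymmetrically between the two slabs, guided by the corners $X$ and $Y$. In $L_1$, tubes with $i \le s$ (i.e., $A_i \subseteq S_X$) receive piercing $1$, while those with $i > s$ receive piercing $k-1$; this produces sums (I) and (II) in the RHS. In $L_2$, tubes with $i \ne r$ receive piercing $1$, while $A_r \times I_2$ receives piercing $k-1$; this produces sums (III) and (IV). Refining each tube optimally for its assigned piercing then contributes at most the claimed $f_{d+1}$-cost per tube, so that summing over all $2m$ tubes gives exactly the RHS.

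The piercing condition in dimensions $1, \ldots, d$ holds within each slab because $\{A_i\}$ is an intermediate partition of $[n]^d$. The $k$-piercing check in the new dimension is immediate for lines through $A_i$ with $i > s$ or $i = r$, since the two assigned piercings then sum to at least $k$. The delicate case is a line through $A_i$ with $i \le s$ and $i \ne r$, where the two labels naively sum only to $1 + 1 = 2$.

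Resolving this delicate case will be the main obstacle, and the plan is to exploit the sub-brick nature of $S_X$: since $S_X = A_1 \cup \cdots \cup A_s$ is a proper sub-brick of $[n]^d$, the block $S_X \times I_1$ is a proper sub-brick of $[n]^{d+1}$, and the $s$ tubes inside it can be refined in a coordinated fashion, aligning their cuts in the new dimension across $S_X$. The goal is to arrange these coordinated cuts so that any vertical line through $S_X$ in $L_1$ already picks up $k-1$ crossings, which combines with the single crossing coming from $L_2$ to reach the required $k$. Verifying that this coordinated refinement fits within the per-tube budget $f_{d+1}(a_{i,1}, \ldots, a_{i,d}, 1)$ used in sum (I), rather than something like $f_{d+1}(a_{i,1}, \ldots, a_{i,d}, k-1)$, is the main technical step, and should rely on $S_X$ being strictly smaller than $[n]^d$ in at least one coordinate direction, which provides the slack needed to accommodate the extra cuts without additional cost.
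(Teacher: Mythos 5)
Your set-up (two slabs along dimension $d+1$, with the asymmetric weight assignment producing the four sums on the right-hand side) matches the paper, but you place two \emph{identical, unmoved} copies of the intermediate partition in the slabs, and then the case of a line through $A_i$ with $i \le s$, $i \ne r$ genuinely gets only $1 + 1 = 2$. The paper's fix is not a coordinated refinement; it is geometric. In the top slab one \emph{rotates} the copy so that corner $Y$ lands on corner $X$, and then \emph{rescales} it so that $A_r$ (now sitting at $X$'s corner) covers all of $S_X = A_1 \cup \cdots \cup A_s$ below. After this, any line in direction $d+1$ hitting some $A_i$ with $i \le s$ in the bottom slab automatically hits $A_r$ in the top slab, collecting $1 + (k-1) = k$; every other line hits some $A_i$ with $i > s$ below (weight $k-1$) plus whatever is above (weight $\ge 1$). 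This is where the hypotheses that $S_X$ is a \emph{proper} sub-brick and that $A_r$ contains a \emph{corner} are actually used: one can stretch the single corner brick $A_r$ to dominate the whole proper sub-brick $S_X$ while staying inside $[n']^d$ for some slightly larger $n'$.

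The repair you sketch does not close the gap within the stated budget. A line in direction $d+1$ through $A_i \times I_1$ for $i\le s$ passes through exactly one tube, so ``aligning cuts across $S_X$'' does not add to the number of pieces that line meets; to get $k-1$ crossings inside a single tube of $L_1$ you would have to refine that tube subject to piercing $k-1$ in the last coordinate, i.e.\ spend $f_{d+1}(a_{i,1},\ldots,a_{i,d},k-1)$ rather than $f_{d+1}(a_{i,1},\ldots,a_{i,d},1)$, which is precisely what the lemma is designed to avoid. The properness of $S_X$ gives no such free slack. So the proposal is not a valid alternative route; the missing idea is the rotate-and-rescale step for the top slab.
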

\begin{proof}
We split the cube in two parts along the $d+1$-st dimension. We use the given partition for both parts, but with the top part rotated in such a way that $Y$ corresponds to $X.$ We then rescale the top partition in such a way that $A_r$ covers all of $A_1, \ldots, A_s$ in the original partition (note that his may require a minor increase in the $n$ we use). We add $k-1$ for the last dimension of $A_r$ in the top part and all the bricks in the lower part except $A_1, \ldots, A_s,$ we add $1$ for the remaining bricks. This new partition is a new intermediate partition in $d+1$ dimensions, as along first $d$ dimensions all the lines satisfy the condition because we started with an intermediate partition, and along the $d+1$'st, if it passes through any of $A_1, \ldots A_s$ of the lower part it will pass through $A_r$ of the upper part so the sum will be at least $1+k-1$ and, otherwise it will pass through some $A_i,$ $i \ge s+1$ in the lower part and something in the upper part again giving sum at least $k-1+1.$ The inequality now follows from the above observation. 
\end{proof}

We now apply this lemma to the $5$-part intermediate partition, derived from the one given in Figure~\ref{fig:firstkp}, and given in Figure~\ref{fig:secondkp}. We obtain the $3$-dimensional intermediate partition shown in Figure~\ref{fig:thirdkp}.
\begin{figure}
\caption{The intermediate partition in 2 dimensions, to which we apply Lemma~\ref{lem:proper-subbox}. $X$ is denoted by red circle, $Y$ by a blue circle, the parts $A_1,\ldots,A_s$ are shaded red and $A_r$ is shaded blue.}
\begin{center}
\begin{tikzpicture}
\draw [-] (0,0) -- (6.25,0); 
\draw [-] (0,0) -- (0,5); 
\draw [-] (0,5) -- (6.25,5); 
\draw [-] (4,0) -- (4,2.5);
\draw [-] (6.25,0) -- (6.25,5);

\draw [-] (2.5,0) -- (2.5,5);
\draw [-] (0,2.5) -- (6.25,2.5);
\draw[thick] (0,0) circle(0.3)[red];
\draw[thick] (6.25,0) circle(0.3)[blue];
\fill[red,opacity=0.2] (0,0) rectangle (4,2.5);
\fill[blue,opacity=0.2] (4,0) rectangle (6.25,2.5);
\draw [very thin] (1.25,1.25) node[] {{$\Spvek{1;k-1}$}};
\draw [very thin] (1.25,3.75) node[] {{$\Spvek{k-1;1}$}};
\draw [very thin] (3.25,1.25) node[] {\footnotesize{$\Spvek{k-2;1}$}};
\draw [very thin] (4.375,3.75) node[] {{$\Spvek{1;k-1}$}};
\draw [very thin] (5.125,1.25) node[] {\footnotesize{$\Spvek{1;1}$}};
\end{tikzpicture}
\end{center}
\label{fig:secondkp}
\end{figure}
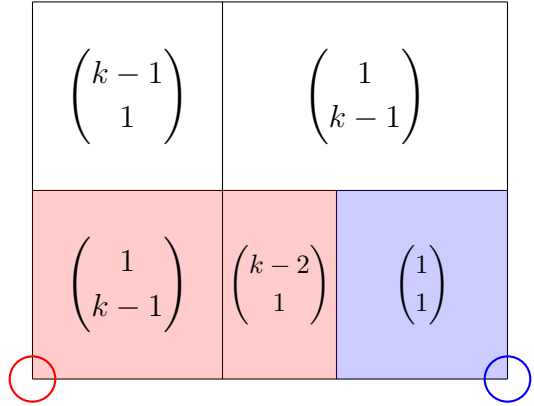

\begin{figure}
\caption{The intermediate partition in 3 dimensions, provided by the above lemma.}
\begin{center}
\begin{tikzpicture}
\draw [-] (0,0) -- (6.25,0); 
\draw [-] (0,0) -- (0,5); 
\draw [-] (0,5) -- (6.25,5); 
\draw [-] (3.75,0) -- (3.75,2.5);
\draw [-] (6.25,0) -- (6.25,5);

\draw[thick] (0,0) circle(0.3)[red];
\draw [-] (2.5,0) -- (2.5,5);
\draw [-] (0,2.5) -- (6.25,2.5);
\fill[red,opacity=0.2] (0,0) rectangle (3.75,2.5);
\draw [very thin] (3.125,-1) node[] {Bottom Layer};
\draw [very thin] (1.25,1.25) node[] {\footnotesize{$\Spvek{1;k-1;1}$}};
\draw [very thin] (1.25,3.75) node[] {\footnotesize{$\Spvek{k-1;1;k-1}$}};
\draw [very thin] (3.125,1.25) node[] {\tiny{$\Spvek{k-2;1;1}$}};
\draw [very thin] (4.375,3.75) node[] {\footnotesize{$\Spvek{1;k-1;k-1}$}};
\draw [very thin] (5,1.25) node[] {\tiny{$\Spvek{1;1;k-1}$}};

\draw [-] (8,0) -- (14.25,0); 
\draw [-] (8,0) -- (8,5); 
\draw [-] (8,5) -- (14.25,5); 
\draw [-] (11.75,0) -- (11.75,2.5);
\draw [-] (14.25,0) -- (14.25,5);

\draw [-] (13,0) -- (13,5);
\draw [-] (8,2.5) -- (14.25,2.5);
\draw[thick] (8,0) circle(0.3)[blue];
\fill[blue,opacity=0.2] (8,0) rectangle (11.75,2.5);
\draw [very thin] (11.125,-1) node[] {Top Layer};
\draw [very thin] (9.875,1.25) node[] {\footnotesize{$\Spvek{1;1;k-1}$}};
\draw [very thin] (10.5,3.75) node[] {\footnotesize{$\Spvek{1;k-1;1}$}};
\draw [very thin] (12.375,1.25) node[] {\tiny{$\Spvek{k-2;1;1}$}};
\draw [very thin] (13.625,3.75) node[] {\tiny{$\Spvek{k-1;1;1}$}};
\draw [very thin] (13.625,1.25) node[] {\tiny{$\Spvek{1;k-1;1}$}};
\end{tikzpicture}
\end{center}
\label{fig:thirdkp}
\end{figure}
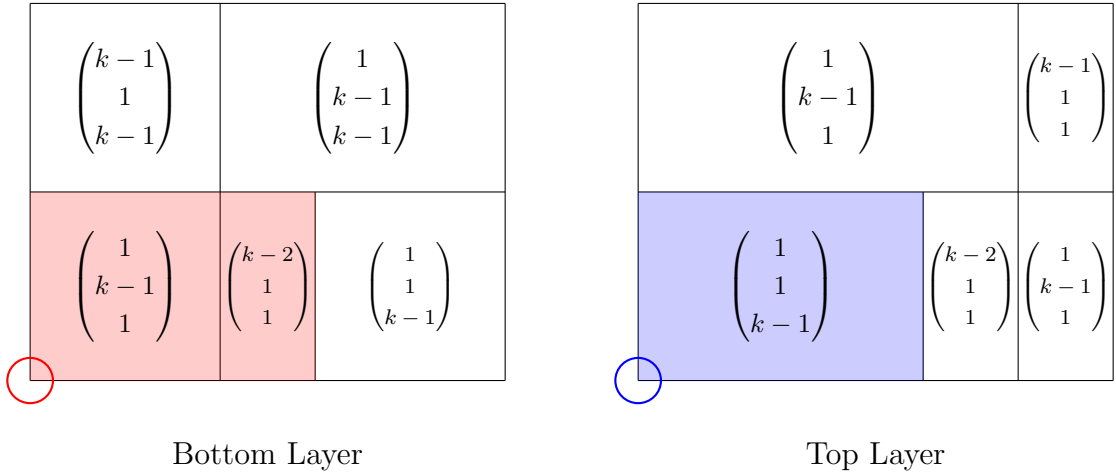

In particular, this implies:
$$f_d(k,\ldots,k) \le 2f_d(1,k-1,k-1,k,\ldots,k)+6f_d(1,1,k-1,k,\ldots,k)+2f_d(1,1,k-2,k,\ldots,k)$$

Unfortunately, this bound still only implies $f_d(k,\ldots,k) \le (4+o_d(1))^dk,$ but modifying this partition slightly, we consider Figure~\ref{fig:fourthkp} and apply the lemma once again. This does achieve an improvement in the base of the exponential term.

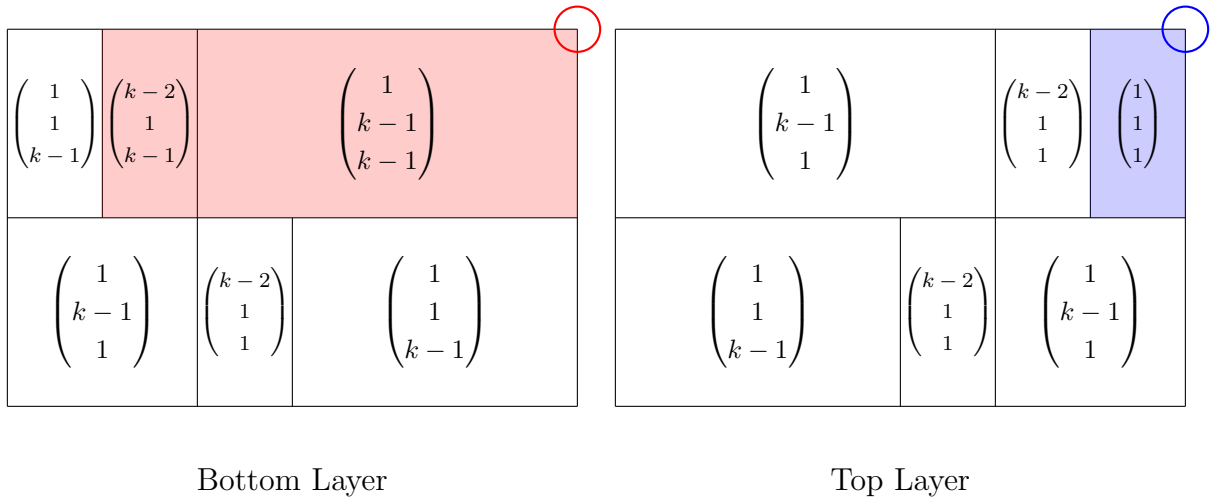
\begin{figure}
\caption{The intermediate partition in 3 dimensions, to which we apply Lemma~\ref{lem:proper-subbox}. $X$ is denoted by red circle, $Y$ by a blue circle, the parts $A_1,\ldots,A_s$ are shaded red and $A_r$ is shaded blue.}
\begin{center}
\begin{tikzpicture}
\draw [-] (0,0) -- (7.5,0); 
\draw [-] (0,0) -- (0,5); 
\draw [-] (0,5) -- (7.5,5); 
\draw [-] (3.75,0) -- (3.75,2.5);
\draw [-] (7.5,0) -- (7.5,5);
\draw [-] (1.25,2.5) -- (1.25,5);

\draw[thick] (7.5,5) circle(0.3)[red];
\draw [-] (2.5,0) -- (2.5,5);
\draw [-] (0,2.5) -- (7.5,2.5);
\fill[red,opacity=0.2] (1.25,2.5) rectangle (7.5,5);
\draw [very thin] (3.75,-1) node[] {Bottom Layer};
\draw [very thin] (1.25,1.25) node[] {\footnotesize{$\Spvek{1;k-1;1}$}};
\draw [very thin] (5,3.75) node[] {\footnotesize{$\Spvek{1;k-1;k-1}$}};
\draw [very thin] (3.125,1.25) node[] {\tiny{$\Spvek{k-2;1;1}$}};
\draw [very thin] (1.875,3.75) node[] {\tiny{$\Spvek{k-2;1;k-1}$}};
\draw [very thin] (0.625,3.75) node[] {\tiny{$\Spvek{1;1;k-1}$}};
\draw [very thin] (5.625,1.25) node[] {\footnotesize{$\Spvek{1;1;k-1}$}};

\draw [-] (8,0) -- (15.5,0); 
\draw [-] (8,0) -- (8,5); 
\draw [-] (8,5) -- (15.5,5); 
\draw [-] (11.75,0) -- (11.75,2.5);
\draw [-] (15.5,0) -- (15.5,5);
\draw [-] (14.25,2.5) -- (14.25,5);

\draw [-] (13,0) -- (13,5);
\draw [-] (8,2.5) -- (15.5,2.5);
\draw[thick] (15.5,5) circle(0.3)[blue];
\fill[blue,opacity=0.2] (14.25,2.5) rectangle (15.5,5);
\draw [very thin] (11.75,-1) node[] {Top Layer};
\draw [very thin] (9.875,1.25) node[] {\footnotesize{$\Spvek{1;1;k-1}$}};
\draw [very thin] (10.5,3.75) node[] {\footnotesize{$\Spvek{1;k-1;1}$}};
\draw [very thin] (12.375,1.25) node[] {\tiny{$\Spvek{k-2;1;1}$}};
\draw [very thin] (13.625,3.75) node[] {\tiny{$\Spvek{k-2;1;1}$}};
\draw [very thin] (14.875,3.75) node[] {\tiny{$\Spvek{1;1;1}$}};
\draw [very thin] (14.25,1.25) node[] {\footnotesize{$\Spvek{1;k-1;1}$}};
\end{tikzpicture}
\end{center}
\label{fig:fourthkp}
\end{figure}
In particular, we find:
\begin{align*}f_d(k,\ldots,k) \le & 8f_d(1,1,k-1,k-1,k,\ldots,k)+5f_d(1,1,k-2,k-1,k,\ldots,k)+\\
& 8f_d(1,1,1,k-1,k,\ldots,k)+3f_d(1,1,1,k-2,k,\ldots,k)
\end{align*}

This already suffices to give an example with at most about $3.97^dk$ bricks. However, since the red bricks have large piercing values in all but one dimension, it turns out that a further manual step can be made before applying Lemma~\ref{lem:proper-subbox}. In particular, using the partition given in Figure~\ref{fig:fifthkp} we obtain the following slight improvement:
\begin{align*}f_d(k,\ldots,k) \le & 10f_d(1,1,k-1,k-1,k,\ldots,k)+3f_d(1,1,k-2,k-1,k,\ldots,k)+\\
& 6f_d(1,1,1,k-1,k,\ldots,k)+3f_d(1,1,1,k-2,k,\ldots,k).
\end{align*}
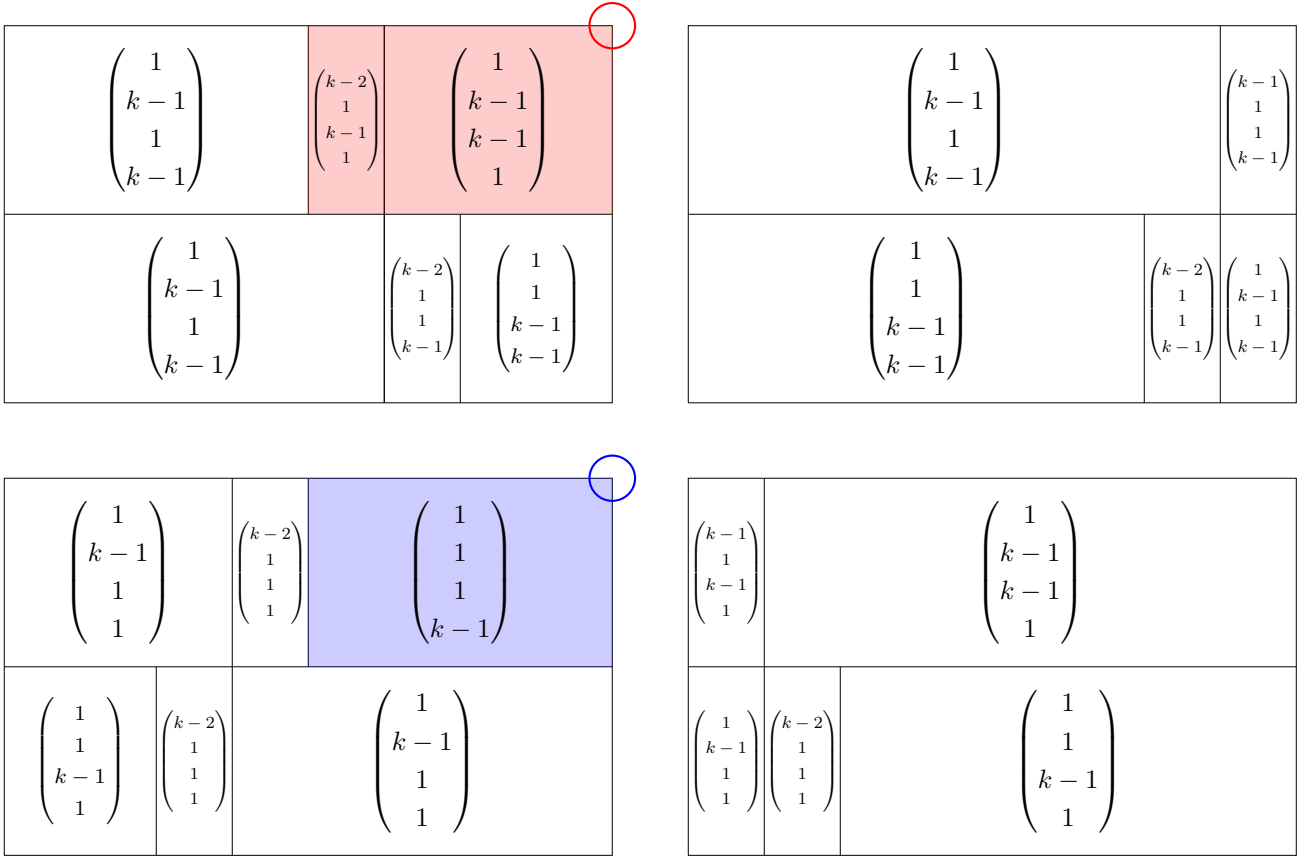
\begin{figure}
\caption{An intermediate partition in 4 dimensions. The third and fourth dimensions move between the rectangles horizontally and vertically respectively.}
\begin{center}
\begin{tikzpicture}
\draw [very thin] (0,0) -- (8,0) -- (8,5) -- (0,5) -- (0,0);
\draw [very thin] (9,0) -- (17,0) -- (17,5) -- (9,5) -- (9,0);
\draw [very thin] (0,6) -- (8,6) -- (8,11) -- (0,11) -- (0,6);
\draw [very thin] (9,6) -- (17,6) -- (17,11) -- (9,11) -- (9,6);
\draw [very thin] (0,2.5) -- (8,2.5);
\draw [very thin] (0,8.5) -- (8,8.5);
\draw [very thin] (9,2.5) -- (17,2.5);
\draw [very thin] (9,8.5) -- (17,8.5);
\draw [very thin] (10,0) -- (10,5);
\draw [very thin] (11,0) -- (11,2.5);
\draw [very thin] (2,0) -- (2,2.5);
\draw [very thin] (3,0) -- (3,5);
\draw [very thin] (4,2.5) -- (4,5);
\draw [very thin] (4,8.5) -- (4,11);
\draw [very thin] (5,6) -- (5,11);
\draw [very thin] (6,6) -- (6,8.5);
\draw [very thin] (5,6) -- (5,11);
\draw [very thin] (15,6) -- (15,8.5);
\draw [very thin] (16,6) -- (16,11);
\draw[thick] (8,5) circle(0.3)[blue];
\draw[thick] (8,11) circle(0.3)[red];
\fill[red,opacity=0.2] (4,8.5) rectangle (8,11);
\fill[blue,opacity=0.2] (4,2.5) rectangle (8,5);
\draw [very thin] (2.5,1.25) node[] {\tiny{\scalebox{0.8}{$\Spvek{k-2;1;1;1}$}}};
\draw [very thin] (1,1.25) node[] {\tiny{\scalebox{1}{$\Spvek{1;1;k-1;1}$}}};
\draw [very thin] (5.5,1.25) node[] {\footnotesize{\scalebox{1}{$\Spvek{1;k-1;1;1}$}}};
\draw [very thin] (1.5,3.75) node[] {\footnotesize{\scalebox{1}{$\Spvek{1;k-1;1;1}$}}};
\draw [very thin] (3.5,3.75) node[] {\tiny{\scalebox{0.8}{$\Spvek{k-2;1;1;1}$}}};
\draw [very thin] (6,3.75) node[] {\footnotesize{\scalebox{1}{$\Spvek{1;1;1;k-1}$}}};

\draw [very thin] (9.5,3.75) node[] {\tiny{\scalebox{0.8}{$\Spvek{k-1;1;k-1;1}$}}};
\draw [very thin] (9.5,1.25) node[] {\tiny{\scalebox{0.8}{$\Spvek{1;k-1;1;1}$}}};
\draw [very thin] (10.5,1.25) node[] {\tiny{\scalebox{0.8}{$\Spvek{k-2;1;1;1}$}}};
\draw [very thin] (13.5,3.75) node[] {\footnotesize{\scalebox{1}{$\Spvek{1;k-1;k-1;1}$}}};
\draw [very thin] (14,1.25) node[] {\footnotesize{\scalebox{1}{$\Spvek{1;1;k-1;1}$}}};

\draw [very thin] (2.5,7.25) node[] {\footnotesize{\scalebox{1}{$\Spvek{1;k-1;1;k-1}$}}};
\draw [very thin] (2,9.75) node[] {\footnotesize{\scalebox{1}{$\Spvek{1;k-1;1;k-1}$}}};
\draw [very thin] (4.5,9.75) node[] {\tiny{\scalebox{0.8}{$\Spvek{k-2;1;k-1;1}$}}};
\draw [very thin] (6.5,9.75) node[] {\footnotesize{\scalebox{1}{$\Spvek{1;k-1;k-1;1}$}}};
\draw [very thin] (5.5,7.25) node[] {\tiny{\scalebox{0.8}{$\Spvek{k-2;1;1;k-1}$}}};
\draw [very thin] (7,7.25) node[] {\tiny{\scalebox{1}{$\Spvek{1;1;k-1;k-1}$}}};

\draw [very thin] (12.5,9.75) node[] {\footnotesize{\scalebox{1}{$\Spvek{1;k-1;1;k-1}$}}};
\draw [very thin] (16.5,9.75) node[] {\tiny{\scalebox{0.8}{$\Spvek{k-1;1;1;k-1}$}}};
\draw [very thin] (12,7.25) node[] {\footnotesize{\scalebox{1}{$\Spvek{1;1;k-1;k-1}$}}};
\draw [very thin] (15.5,7.25) node[] {\tiny{\scalebox{0.8}{$\Spvek{k-2;1;1;k-1}$}}};
\draw [very thin] (16.5,7.25) node[] {\tiny{\scalebox{0.8}{$\Spvek{1;k-1;1;k-1}$}}};

\end{tikzpicture}
\end{center}
\label{fig:fifthkp}
\end{figure}

This inequality implies 
$$f_d(k,\ldots,k) \le  13f_{d-2}(k,\ldots,k)+9f_{d-3}(k,\ldots,k)$$
which in turn implies $f_d(k,\ldots,k) \le x_0^dk$ where $x_0$ is the largest root of $x^3-13x-9,$ $x_0 \approx 3.91.$ In particular, this shows that $\gamma_k\le \beta_k\le x_0.$

For small values of $k$ the above inequality actually implies a somewhat stronger result, provided we take more care with the $k-1,k-2$ terms. E.g. for $k=3$ we get:
\begin{align*}f_d(3,\ldots,3) \le & 10f_d(1,1,2,2,3,\ldots,3)+9f_d(1,1,1,2,3,\ldots,3)+
3f_d(1,1,1,1,3,\ldots,3)\\
\le & (10\cdot 4+9 \cdot 2+ 3)f_{d-4}(3,\ldots,3)=61f_{d-4}(3,\ldots,3)
\end{align*}
Where we repeatedly used $f_d(2,a_1,\ldots,a_{d-1}) \le 2f_d(1,a_1,\ldots,a_{d-1})=f_{d-1}(a_1,\ldots,a_{d-1}),$ which follows by taking two identical copies of the $d-1$ dimensional example. This inequality implies $\gamma_3 \le \beta_3 \le \sqrt[4]{61}\approx 2.79.$

\subsubsection{Boxes}
It is highly unclear how one could use the additional freedom afforded by using boxes instead of bricks. The only ways that we found exploits the fact that it is possible to cover a square using only three boxes, as shown in Figure~\ref{fig:cover}. This allows us to obtain better examples using boxes than the ones using bricks described above.
\begin{figure}
\caption{A square can be covered by $3$ boxes, but not with $3$ bricks.}
\begin{center}
\begin{tikzpicture}
\draw [] (0,0) rectangle (4,4);
\draw [] (0,1) -- (4,1);
\draw [] (0,3) -- (4,3);
\draw [] (1,0) -- (1,4);
\draw [] (3,0) -- (3,4);
\draw[pattern=vertical lines, pattern color=red] (0,1) rectangle (3,4);
\draw[pattern=north east lines, pattern color=green] (0,0) rectangle (1,1);
\draw[pattern=north east lines, pattern color=green] (3,0) rectangle (4,1);
\draw[pattern=north east lines, pattern color=green] (0,3) rectangle (1,4);
\draw[pattern=north east lines, pattern color=green] (3,3) rectangle (4,4);
\draw[pattern=horizontal lines, pattern color=blue] (1,0) rectangle (4,3);
\end{tikzpicture}
\end{center}
\label{fig:cover}
\end{figure}
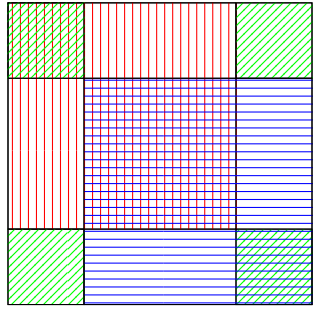

We will reuse the intermediate partition given in Figure~\ref{fig:secondkp} to obtain a new intermediate $3$ dimensional partition which will use three copies of it stacked on top of each other such that in each layer the copy of $A_r$ incident to vertex $Y$ is stretched to make one of the three boxes used to cover a square in Figure~\ref{fig:cover} and divided into $k-1$ copies of itself along the third dimension, as shown in Figure~\ref{fig:sixthkp}. This implies that
$f_d(k,\ldots,k) \le 9f_d(1,1,k-1,k,\ldots,k)+6f_d(1,1,k-2,k,\ldots,k).$

\begin{proof}[Proof of Theorem~\ref{thm:kpiercingbox}]
The above inequality directly implies $f_d(k,\ldots,k) \le 15 f_{d-2}(k,\ldots,k),$ showing $\gamma_k \le \sqrt{15} \approx 3.87$ implying Theorem~\ref{thm:kpiercingbox}. 
\end{proof}
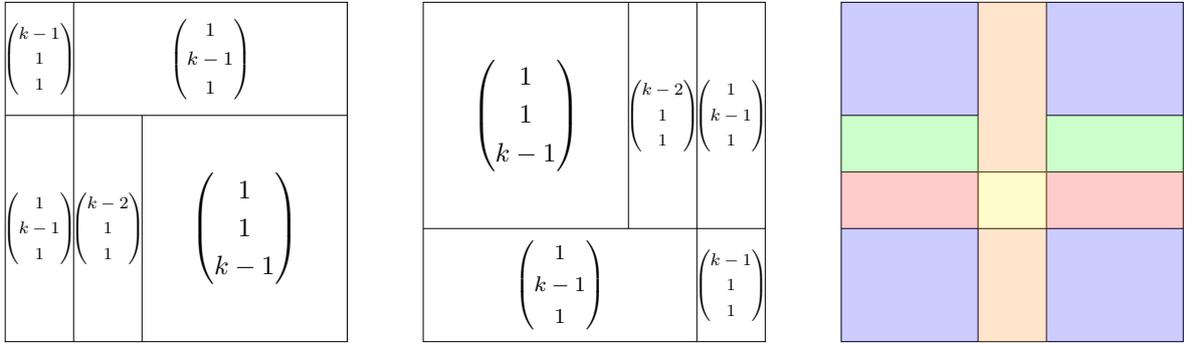
\begin{figure}
\caption{An  intermediate partition based on the above observation. The blue box is labelled $\protect\tvect{1}{1}{k-1}$, red and orange are labelled $\protect\tvect{k-1}{1}{1}$, green is labelled $\protect\tvect{1}{k-1}{1}$ and yellow is labelled $\protect\tvect{1}{k-1}{1}$. All other boxes are in fact bricks.}
\begin{center}
\begin{tikzpicture}
\draw [very thin] (0,0) rectangle (4.5,4.5);
\draw [very thin] (5.5,0) rectangle (10,4.5);
\draw [very thin] (11,0) rectangle (15.5,4.5);
\draw [very thin] (0.9,0) -- (0.9,4.5);
\draw [very thin] (0,3) -- (4.5,3);
\draw [very thin] (1.8,0) -- (1.8,3);
\draw [very thin] (5.5,1.5) -- (10,1.5);
\draw [very thin] (9.1,0) -- (9.1,4.5);
\draw [very thin] (8.2,1.5) -- (8.2,4.5);
\draw [very thin] (12.8,0) -- (12.8,4.5);
\draw [very thin] (13.7,0) -- (13.7,4.5);
\draw [very thin] (11,1.5) -- (15.5,1.5);
\draw [very thin] (13.7,3) -- (15.5,3);
\draw [very thin] (11,2.25) -- (15.5,2.25);
\draw [very thin] (11,3) -- (12.8,3););

\fill[blue,opacity=0.2] (11,0) rectangle (12.8,1.5);
\fill[blue,opacity=0.2] (11,3) rectangle (12.8,4.5);
\fill[blue,opacity=0.2] (13.7,0) rectangle (15.5,1.5);
\fill[blue,opacity=0.2] (13.7,3) rectangle (15.5,4.5);
\fill[red,opacity=0.2] (11,1.5) rectangle (12.8,2.25);
\fill[red,opacity=0.2] (13.7,1.5) rectangle (15.5,2.25);
\fill[green,opacity=0.2] (11,2.25) rectangle (12.8,3);
\fill[green,opacity=0.2] (13.7,2.25) rectangle (15.5,3);
\fill[yellow,opacity=0.2] (12.8,1.5) rectangle (13.7,2.25);
\fill[orange,opacity=0.2] (12.8,0) rectangle (13.7,1.5);
\fill[orange,opacity=0.2] (12.8,2.25) rectangle (13.7,4.5);

\draw [very thin] (0.45,3.75) node[] {\tiny{\scalebox{0.8}{$\Spvek{k-1;1;1}$}}};
\draw [very thin] (2.7,3.75) node[] {\tiny{\scalebox{0.9}{$\Spvek{1;k-1;1}$}}};
\draw [very thin] (0.45,1.5) node[] {\tiny{\scalebox{0.8}{$\Spvek{1;k-1;1}$}}};
\draw [very thin] (1.35,1.5) node[] {\tiny{\scalebox{0.8}{$\Spvek{k-2;1;1}$}}};
\draw [very thin] (3.15,1.5) node[] {\footnotesize{\scalebox{1}{$\Spvek{1;1;k-1}$}}};

\draw [very thin] (6.85,3) node[] {\footnotesize{\scalebox{1}{$\Spvek{1;1;k-1}$}}};
\draw [very thin] (7.3,0.75) node[] {\tiny{\scalebox{1}{$\Spvek{1;k-1;1}$}}};
\draw [very thin] (9.55,0.75) node[] {\tiny{\scalebox{0.8}{$\Spvek{k-1;1;1}$}}};
\draw [very thin] (9.55,3) node[] {\tiny{\scalebox{0.8}{$\Spvek{1;k-1;1}$}}};
\draw [very thin] (8.65,3) node[] {\tiny{\scalebox{0.8}{$\Spvek{k-2;1;1}$}}};

\end{tikzpicture}
\end{center}
\label{fig:sixthkp}
\end{figure}

\subsection{Lower bounds}\label{subseq:2dim}

The lower bound on $p_{\text{brick}}$ given in \eqref{eq:brick-trivial} seemed like having a good chance of actually being the truth. For example, it is tight for all $k$ in two dimensions, as the left image in Figure~\ref{fig:firstkp} shows that $p_{\text{brick}}(2,k) \le 4(k-1),$ which matches the lower bound. In higher dimensions it satisfies the recursive lower bound obtained by the inclusion-exclusion principle through analysing the number of bricks touching faces of dimensions from $0$ to $d-1$; for example the proof of the lower bound used only faces of dimensions $0$ (corners) and $1$ (edges). It turns out however that $21 \le p_{\text{brick}}(3,3)$ showing that the bound is not always tight. In fact, exploiting this fact and the aforementioned inclusion-exclusion inequality one can obtain a lower bound, for $k=3,$ which is by a constant factor better than \eqref{eq:brick-trivial}. We omit further details as both parts of the argument are quite cumbersome and result in only a very weak improvement. 




The case of boxes seems more difficult, even in $2$ dimensions. We conjecture that $p_{\text{box}}(2,k) = 4(k-1)$ ($=p_{\text{brick}}(2,k)$) and show that this is in fact asymptotically correct, as $k$ gets large. To this end we consider the following reduction. Given a partition of $[n]^2$ with the $k$-piercing property we construct an auxiliary graph with one vertex for each box. We colour the edge between two vertices red if there exists a vertical line intersecting both boxes and blue if there exists a horizontal line intersecting both boxes. Since  the $k$-piercing constraint requires that any line intersects at least $k$ boxes, we see that every vertex in our auxiliary graph is both contained inside a clique of at least $k$ vertices with all edges coloured red and a clique of at least $k$ vertices with all edges coloured blue. We therefore formulate the following question, which we find interesting in its own right.

\begin{question}\label{qu:graphqn}
Let $k\ge 1$ be an integer. What is the minimal $N$ such that we can colour the edges of a graph on $N$ vertices red and blue such that every vertex belongs to a monochromatic $K_k$ of each colour?
\end{question}

Note that, by the above construction, the answer to the above question is an upper bound for $p_{\text{box}}(2,k).$ We conjecture that this $N=4(k-1)$. A construction arising from the example in the left image of Figure~\ref{fig:firstkp} which matches this bound can be seen in Figure~\ref{fig:graph}. However, we were only able to prove an asymptotic result.

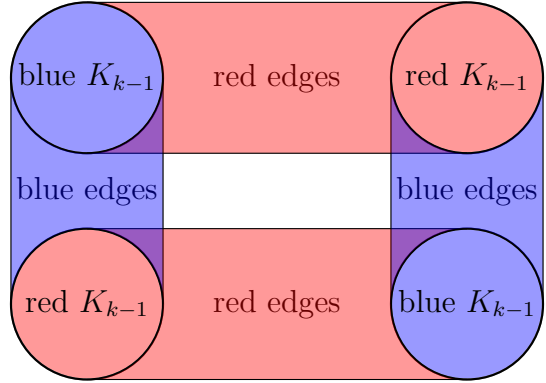
\begin{figure}
\caption{A graph in which every vertex is contained in a red $K_k$ and a blue $K_k$.}
\label{fig:graph}
\begin{center}
\begin{tikzpicture}
\draw [-,name path = u] (0,-1) -- (5,-1);
\draw [-,name path = v] (0,1) -- (5,1);
\draw [-, name path = w] (0,2) -- (5,2);
\draw [-, name path = x] (0,4) -- (5,4);
\draw [-, name path = w] (-1,0) -- (-1,3);
\draw [-, name path = x] (1,0) -- (1,3);
\draw [-, name path = w] (4,0) -- (4,3);
\draw [-, name path = x] (6,0) -- (6,3);
\draw[thick] (0cm,0cm) circle(1cm)[fill=red,opacity=0.4];
\draw[thick] (0cm,3cm) circle(1cm)[fill=blue,opacity=0.4];
\draw[thick] (5cm,0cm) circle(1cm)[fill=blue,opacity=0.4];
\draw[thick] (5cm,3cm) circle(1cm)[fill=red,opacity=0.4];
\draw[thick, name path = A] (0cm,0cm) circle(1cm)[];
\draw[thick, name path = B] (0cm,3cm) circle(1cm)[];
\draw[thick, name path = C] (5cm,0cm) circle(1cm)[];
\draw[thick, name path = D] (5cm,3cm) circle(1cm)[];
\draw [very thin] (0,0) node[] {red $K_{k-1}$};
\draw [very thin] (5,3) node[] {red $K_{k-1}$};
\draw [very thin] (5,0) node[] {blue $K_{k-1}$};
\draw [very thin] (0,3) node[] {blue $K_{k-1}$};
\draw [very thin] (2.5,0) node[] {red edges};
\draw [very thin] (2.5,3) node[] {red edges};
\draw [very thin] (0,1.5) node[] {blue edges};
\draw [very thin] (5,1.5) node[] {blue edges};
\draw[fill=red,opacity=0.4]
  ([shift={(-90:1cm)}]0,0) arc (-90:90:1cm)
  --
  ([shift={(-90+180:1cm)}]5,0) arc (-90+180:90+180:1cm)
  -- cycle;
\draw[fill=red,opacity=0.4]
    ([shift={(-90:1cm)}]0,3) arc (-90:90:1cm)
    --
    ([shift={(-90+180:1cm)}]5,3) arc (-90+180:90+180:1cm)
    -- cycle;
\draw[fill=blue,opacity=0.4]
        ([shift={(0:1cm)}]0,0) arc (0:180:1cm)
        --
        ([shift={(-180:1cm)}]0,3) arc (-180:0:1cm)
        -- cycle;
\draw[fill=blue,opacity=0.4]
        ([shift={(0:1cm)}]5,0) arc (0:180:1cm)
        --
        ([shift={(-180:1cm)}]5,3) arc (-180:0:1cm)
        -- cycle;

\end{tikzpicture}
\end{center}
\end{figure}

\begin{proposition}\label{prop:graph}
In Question~\ref{qu:graphqn} we have $N\ge (4+o_k(1))k$.
\end{proposition}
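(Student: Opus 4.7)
My plan is to reverse-engineer the extremal four-block example in Figure~\ref{fig:graph}. I start by picking any red $K_k$ $R$, and for each $u \in R$ a blue $K_k$ $B_u \ni u$. Since a red and a blue clique share at most one vertex (else some edge would have two colors), $B_u \cap R = \{u\}$, and so $B'_u := B_u \setminus \{u\}$ is a blue $(k-1)$-clique in $W := V \setminus R$ with every vertex blue-adjacent to $u$. It then suffices to show $|W| \geq (3-o_k(1))k$, since $|R|=k$.

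Since $\sum_{u \in R} |B'_u| = k(k-1)$, a multiplicity/pigeonhole argument yields a dichotomy: either $\bigl|\bigcup_u B'_u\bigr| \geq (3 - o_k(1))k$, which already finishes the proof, or there is a large subset $R' \subseteq R$ with $|R'| \geq (1-o_k(1))k$ whose corresponding $B'_u$'s share a common blue $(k-1)$-clique $S \subseteq W$ (which is then blue-adjacent to all of $R'$). I then apply the same reasoning one level down. Each $s \in S$ lies in a red $K_k$ $R_s$ that must avoid $R'$ (those edges are blue) and meets $S$ only at $s$; hence $R_s \setminus \{s\}$ is, up to $o_k(k)$ losses, a red $(k-1)$-clique inside $V \setminus (R \cup S)$. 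Re-running the dichotomy on this new family of red $(k-1)$-cliques either finishes the proof or produces a common red $(k-1)$-clique $T \subseteq V \setminus (R \cup S)$, red-adjacent to a $(1-o_k(1))$-fraction of $S$.

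A final layer closes the bound. Each $t \in T$ lies in some blue $K_k$; since $R$ is a red clique, at most one vertex of this $K_k$ lies in $R$, and since $t$ is red-adjacent to $S$ and to $T\setminus\{t\}$, no other vertex of $S \cup T$ can lie in it. So at least $k-2$ of its $k-1$ non-$t$ vertices lie in $V \setminus (R \cup S \cup T)$, and tallying yields $N \geq k + (k-1) + (k-1) + (k-2) = 4k-4$, matching the upper bound up to an additive constant. The main obstacle is controlling the $o_k(1)$ losses across the three common-core extractions so that they compound additively rather than multiplicatively; the cleanest route seems to be a stability-type statement, showing that in each family of $(k-1)$-cliques either the union is $\Omega(k^2)$ or the family is almost identical (sharing a single core up to $o_k(k)$ exceptions), from which the proof by three-step iteration follows.
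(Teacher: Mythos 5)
Your plan is a structural, ``reverse-engineer the extremal example'' argument: pull a red $K_k$, a common blue core, a common red core, and close the circle. The paper's proof is genuinely different in spirit: it repeatedly peels off the \emph{largest} red and blue cliques from what remains, and runs a degree-counting argument to show that the sizes of successive cliques cannot decay quickly, leading to a recursion $c_i \ge 2\sqrt{\tfrac{k-i-1}{k-1}\,c_{i-1}}$ which converges to $4$ after $O(\log k)$ iterations. The iterative counting is what lets the authors avoid any structural ``common core'' claim.

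The genuine gap in your proposal is the dichotomy you label as the main obstacle: either $\bigl|\bigcup_u B'_u\bigr| \ge (3-o_k(1))k$, or a $(1-o_k(1))$-fraction of the chosen blue cliques $B'_u$ share a common blue $(k-1)$-clique $S$ that is blue-adjacent to almost all of $R$. This stability statement is not a routine pigeonhole step and I do not believe it is true as stated. A family of $k$ blue $(k-1)$-cliques inside a set $W$ of size only $\approx 2k$ (so the first branch fails) need not share a large common core: pairwise intersections of two $(k-1)$-subsets of a $2(k-1)$-set can be empty, and even mild averaging only gives you a single heavily-covered vertex, not a $(k-1)$-clique simultaneously blue-adjacent to $(1-o(1))k$ vertices of $R$. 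One can at least imagine several disjoint ``cores'' each serving a constant fraction of $R$, which breaks the subsequent layers of your argument (the red cliques $R_s$ need $s$ to be blue-adjacent to essentially all of $R$, not just a fraction, in order to be disjoint from $R$). Moreover, if your chain of extractions did go through with only additive $o_k(k)$ losses, you would get the clean bound $N \ge 4k-4$, which is strictly stronger than the paper's $N \ge (4+o_k(1))k$ and is only stated as a conjecture there; this should make you suspicious that the intermediate stability lemma is doing much more work than you can actually supply. If you want to salvage the approach, I would suggest replacing ``any blue $K_k$ through $u$'' by ``the largest blue clique in the remaining graph,'' at which point you are naturally led to the iterative stripping-and-counting scheme that the paper uses.
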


\begin{proof}
Let $R$ be the vertex set of a largest red clique and $B$ the vertex set of a largest blue clique in the graph. Note that $|R \cap B| \le 1,$ as each edge can only have one colour. Define $A_0 =R \setminus B$ and $B_0=B \setminus R.$ Let $a_0=|A_0| \ge k-1$ and $b_0=|B_0|\ge k-1.$

In general, let $R$ and $B$ be the vertex sets of a largest red and blue clique on $G \setminus ( A_0 \cup \ldots \cup A_{i-1} \cup B_0 \cup \ldots \cup B_{i-1}),$ respectively. As before, $|R \cap B| \le 1$ and we define $A_i =R \setminus B$ and $B_i=B \setminus R.$ Let $a_i=|A_i|$ and $b_i=|B_i|.$

Given a vertex $v$ in $A_0 \cup \ldots \cup A_{i-1}$ it belongs to a blue $k$-clique. This clique can have at most one vertex in each of $A_0,A_1, \ldots,A_{i-1},$ one of which is $v$ itself. Similarly, by choice of $B_i$ we know this clique can have at most $b_i+1$ vertices outside of $A_0 \cup \ldots \cup A_{i-1} \cup B_0 \cup \ldots \cup B_{i-1}.$ This implies that $v$ has blue degree at least $k-1-(i-1)-(b_i+1)=k-i-b_i-1$ towards $B_0 \cup \ldots \cup B_{i-1}.$ An analogous argument shows that any $w \in B_0 \cup \ldots \cup B_{i-1}$ has red degree at least $k-i-a_i-1$ towards $A_0 \cup \ldots \cup A_{i-1}.$

In particular, letting $A=a_0+\ldots+a_{i-1}$ and $B=b_0+\ldots+b_{i-1}$, this implies that
$$AB \ge A(k-i-b_i-1)+B(k-i-a_i-1).$$
Now define $c_{i-1}$ by $A+B=c_{i-1}(k-1).$ Since there are at least $c_{i-1}(k-1)+a_i+b_i$ vertices in $G.$ 
So we get $$AB+Ab_i+Ba_i \ge (k-i-1)c_{i-1}(k-1).$$

For a fixed $c_{i-1}$ the left-hand side is maximised for $A=c_{i-1}(k-1)/2-(a_i-b_i)/2$ and $B=c_{i-1}(k-1)/2+(a_i-b_i)/2.$ This gives

$$c_{i-1}^2(k-1)^2/4-(a_i-b_i)^2/4+(a_i+b_i)c_{i-1}(k-1)/2+(a_i-b_i)^2/2 \ge (k-i-1)c_{i-1}(k-1)$$

$$\Rightarrow c_{i-1}^2(k-1)^2+(a_i-b_i)^2+2(a_i+b_i)c_{i-1}(k-1)\ge 4(k-i-1)c_{i-1}(k-1)$$

$$\Rightarrow c_{i-1}^2(k-1)^2+(a_i+b_i)^2+2(a_i+b_i)c_{i-1}(k-1)\ge 4(k-i-1)c_{i-1}(k-1)$$

$$\Rightarrow (c_{i-1}(k-1)+a_i+b_i)^2\ge 4(k-i-1)c_{i-1}(k-1).$$

Since $c_i(k-1)=c_{i-1}(k-1)+a_i+b_i,$ we get 

$$c_i \ge 2 \sqrt{\frac{k-i-1}{k-1}c_{i-1}}\ge 2^{1+1/2+\ldots+1/2^{i}}\left(\frac{k-i-1}{k-1} \right)^{1/2+1/4+\ldots+1/2^{i}}$$
$$=4 \times 2^{-1/2^{i+1}}(1-i/(k-1))^{1-1/2^{i+1}}.$$

Choosing $i=\mathcal{O}(\log(k))$ gives the result.
\end{proof}

Proposition~\ref{prop:2dim} follows immediately from this result, by the above reduction. 

Note that Question~\ref{qu:graphqn} generalises naturally to $t$ colours. The proof of Proposition~\ref{prop:graph} can be easily modified to give a lower bound of $(2t+o_k(1))k$ for this generalisation, and the construction on Figure~\ref{fig:graph} can also be modified to give an upper bound of $2t(k-1)$. While the lower bound for this question applies to the $k$-piercing question, giving a lower bound of $(2d+o_d(1))k$ in $d$-dimensions which does beat the trivial bound of $d(k-1)$ from the start of the section, this bound is not particularly strong so we omit the full details. It seems that in two dimensions Question~\ref{qu:graphqn} captures the difficulty of the $k$-piercing problem, while the generalised version does not fully capture the difficulties of the higher dimensional piercing problem. 

With this in mind we consider the following reduction. Given a $k$-piercing partition in $d$ dimensions, consider the complete graph $K_n$ with vertices being boxes. We colour an edge between two boxes in colour $i$ if they are intersected by some $d-1$ dimensional plane orthogonal to the $i$-th dimensional axis. This gives a colouring in $d$ colours, such that every edge gets at most $d-1$ colours. 
Furthermore, every vertex is a part of a monochromatic $K_t$ in each colour, where $t=p_{\text{box}}(d-1,k).$ We shall use this to give the following lower bound.
\begin{theorem}
$$p_{\text{box}}(d,k) \ge e^{\frac{\sqrt{d}}{4}}(k-1)$$
\end{theorem}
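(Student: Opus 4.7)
The natural approach is to apply the graph-theoretic reduction described immediately before the theorem statement, and then perform a dimension induction. Given a $k$-piercing partition of $[n]^d$ into $N$ boxes, I form the auxiliary complete graph on the boxes and colour the edge $\{B,B'\}$ with colour $i$ whenever some hyperplane orthogonal to the $i$-th axis meets both boxes. Since any two disjoint boxes must be separated along some axis, every edge carries at most $d-1$ colours. Moreover, by Helly's theorem in one dimension, any monochromatic $K_s$ in colour $i$ corresponds to $s$ boxes simultaneously pierced by a common hyperplane orthogonal to axis $i$, whose restrictions to that hyperplane form a $(d-1)$-dimensional $k$-piercing partition; hence every vertex lies in a monochromatic $K_t$ in each colour, with $t = p_{\text{box}}(d-1,k)$.

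I would then prove by induction on $d$ the one-step inequality
\[
p_{\text{box}}(d,k)\ \ge\ \left(1 + \tfrac{1}{8\sqrt{d}}\right) p_{\text{box}}(d-1,k),
\]
starting from the base case $p_{\text{box}}(2,k) \ge (4+o_k(1))(k-1)$ given by Proposition~\ref{prop:2dim}. Using the standard estimate $\sum_{j=1}^{d} 1/\sqrt{j} \ge 2\sqrt{d} - O(1)$, this recursion telescopes to the desired bound $p_{\text{box}}(d,k) \ge e^{\sqrt{d}/4}(k-1)$. The one-step inequality itself would be proved by mimicking the iterative clique-extraction scheme used in Proposition~\ref{prop:graph}: at each stage, one selects a largest remaining monochromatic clique in each of the $d$ colours and derives, via double-counting edges that are forced (by box-disjointness) to miss some colour, a quadratic-type recursion on the accumulated clique sizes analogous to the bound $c_i^2 \ge 4 c_{i-1}(k-i-1)/(k-1)$ obtained there.

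The main obstacle is that, in contrast to the two-colour setting, an edge may now carry up to $d-1$ colours, so cliques of different colours need no longer be almost disjoint; in particular the easy bound $|R \cap B| \le 1$ from the proof of Proposition~\ref{prop:graph} fails. Tracking the overlaps between cliques of different colours therefore requires more careful bookkeeping, relying only on the fact that every edge still misses at least one colour. Extracting the correct $\Theta(1/\sqrt{d})$ per-dimension improvement factor, rather than the weaker $\Theta(1/d)$ factor that a straightforward Tur\'an-type double-count produces, is where I would expect the main technical difficulty to lie.
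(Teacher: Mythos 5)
You correctly set up the graph-theoretic reduction and correctly identify that one wants a per-dimension ratio of roughly $1+\Theta(1/\sqrt{d})$, telescoping via $\sum_j 1/\sqrt{j} \approx 2\sqrt{d}$. But the crux of the argument---establishing that ratio---is left unproved in your proposal. You suggest mimicking the iterative clique-extraction of Proposition~\ref{prop:graph}, and you yourself flag the obstruction: with $d$ colours and edges allowed to carry up to $d-1$ of them, cliques of different colours can overlap heavily, so the disjointness bookkeeping from the two-colour case breaks down and you only expect a weaker $\Theta(1/d)$ gain from a naive Tur\'an-type count. You do not resolve this, so the proposal has a genuine gap exactly where the work is.

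The paper's proof avoids the clique-peeling machinery altogether and does something simpler. Pass to the \emph{complement} colouring: assign to each edge exactly the colours it was \emph{not} given, so that every edge now receives at least one colour, and a monochromatic $K_t$ in colour $c$ of the original becomes a $t$-set with no colour-$c$ edges in the complement. For each vertex $v$ and each colour $c$, $v$ lies in such a colour-$c$-free $t$-set, hence $v$ is incident to at most $n-t$ colour-$c$ edges; fixing one colour-$c$-free $t$-set $I$, every colour-$c$ edge has an endpoint outside $I$, so the number of colour-$c$ edges is at most $(n-t)^2$. Summing over the $d$ colours and using that the complement colouring covers all $\binom{n}{2}$ edges gives $d(n-t)^2 \ge \binom{n}{2}$, which rearranges to $n-1 \ge \left(1+\frac{1}{\sqrt{2d}-1}\right)(t-1)$, and telescoping from the trivial $p_{\text{box}}(1,k)=k$ yields the claimed bound. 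This is a single global double-count rather than an iterative extraction, and it circumvents the overlap issue entirely; it is the key idea your proposal is missing.
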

\begin{proof}
We consider the complement of the colouring of the $K_n$ described in the previous paragraph. In the complement each edge gets assigned only the colours it was not assigned in the above colouring. As each edge had at most $d-1$ colours, the new colouring assigns at least one colour to each edge. Furthermore, for every vertex $v$ and every colour $c$, $v$ belongs to a set of size $t$ within which there is no edge of colour $c.$ 

We claim that this implies that for each colour there are at most $(n-t)^2$ edges of this colour. To see this, note that there needs to exist an independent set of size $t$ in this colour and each of the remaining $n-t$ vertices can be incident to at most $n-t$ edges of this colour. 

As our new colouring needed to cover all the possible edges at least once, this implies that 
\begin{align*}
d & \ge\frac{n(n-1)}{2(n-t)^2} \\
\implies n-1 & \ge \left(1+\frac{1}{\sqrt{2d}-1}\right)(t-1) \\
\implies p_{\text{box}}(d,k)-1 & \ge \left(1+\frac{1}{\sqrt{2d}-1}\right)(p_{\text{box}}(d-1,k)-1).
\end{align*}
This gives
\begin{align*}
p_{\text{box}}(d,k) & \ge \prod_{i=2}^{d}\left(1+\frac{1}{\sqrt{2i}-1}\right)(k-1)+1 \\
& \ge e^{\sum_{i=2}^{d} \frac{1}{2\sqrt{2i}}}(k-1) \\
& \ge e^{\frac{1}{2\sqrt{2}}\sum_{i=2}^{d} \frac{1}{\sqrt{i}}}(k-1) \\
& \ge e^{\frac{\sqrt{d}}{4}}(k-1)
\end{align*}
as claimed.

\end{proof}
\section{Conclusion and open problems}\label{sec:concl}
There are a large number of very interesting questions that remain in this area, and we shall now list just a few.

It remains, of course, to determine the asymptotics of $\fo$. The most important question seems to be the following.

\begin{question}\label{qu:finalfo}
Is $\fo(n,d)=(2+o(1))^d$ as $n,d\rightarrow \infty$?
\end{question}

One may also consider the original question of Kearnes and Kiss with a relaxation of the condition that the boxes partition $[n]^d$. In their paper~\cite{leader}, Leader, Mili\'{c}evi\'{c} and Tan ask how many proper boxes are required to form a double cover of $[n]^d$, and specifically whether at least $2^d$ are required. A natural construction involves taking three copies of a partition of $[n]^{d-1}$ and taking the products of these with the sets $\{1,2\}$, $\{2,\dots,n\}$ and $\{1,3,4,\ldots,n\}$ respectively, giving a double cover of size $(3/2)2^d$. We can show that this construction is not best possible (a simulated annealing approach found a double cover of size 11 in $[3]^3$ and Gurobi did even better by finding a construction of size 21 in $[3]^4$), but we have not been able to beat $2^d$ and the question remains open. 

Regarding the $k$-piercing problem, there are several possible angles. Again, the most important question concerns improving the lower bound.

\begin{question}\label{qu:finalkp}
Does there exist an $\varepsilon>0$ such that for a fixed $k$ we have $p_{\text{box}}(d,k) \ge (2+\varepsilon)^d$?
\end{question}

The analogous question for $p_{\text{brick}}$ would be a natural first step, interesting in its own right. 

Along similar lines is the regime where $d$ is fixed and $k$ is allowed to grow. As discussed in Section~\ref{sec:piercing}, the bound for this problem is always linear in $k$, but finding the constant of linearity seems hard. 

\begin{question}\label{qu:finalkp2}
Let $d$ be fixed so that $p_{\text{box}}(d,k)=(C_d+o_k(1))k$. How does $C_d$ grow with $d$? Must $C_d$ be exponential in $d$? 
\end{question}

As noted in Section~\ref{sec:piercing}, we are only able to show that $ e^{\frac{\sqrt{d}}{4}}(k-1)\le C_d\le 15^{d/2}$. Proposition~\ref{prop:2dim} shows that $C_2=4$, but finding $C_3$ is already beyond our methods. Answering this question would directly extend Theorem~\ref{thm:alon} and therefore probably requires some interesting new ideas.


To finish, we shall describe one last problem which is of particular interest. We observe that in the $k$-piercing problem the requirement that the boxes $B_i$ partition $[n]^d$ can be dropped without trivialising the question, provided that we maintain the constraint that the $B_i$ are disjoint. In particular, we could ask the following question.

\begin{question}\label{qu:nopartition}
Let $n\ge k$ and $d\ge 1$ be integers. Let $\{B^1,B^2,\dots, B^m\}$ be a collection of disjoint proper boxes in $[n]^d$ with $k$-piercing property. What lower bounds can be shown for $m$? In particular, do we have $m\ge 2^d$?
\end{question}

When $k=2$ this generalises the original question of Kearnes and Kiss, however the proof of Theorem~\ref{thm:alon} relies on the $B_i$ forming a partition and so the same idea cannot be used. Indeed the authors know of no approach that gives a bound better than $(1+o(1))^d$ for this question, although computer search finds no examples with $m<2^d$.

\section*{Acknowledgements}
We thank Imre Leader and Bhargav Narayanan for useful conversations about this project.

\vspace{-0.2cm}
\section{Appendix}
\subsection{List of coordinates of boxes in Figure~\ref{fig:25oddbox}}
~

\noindent\texttt{Box(1) = \{1,2,3\} x \{1,2,3\} x \{1\}\\
Box(2) = \{1,2,3\} x \{1,2,3\} x \{2\}\\
Box(3) = \{2,4,5\} x \{1,4,5\} x \{3\}\\
Box(4) = \{2,3,5\} x \{2,3,5\} x \{4\}\\
Box(5) = \{1,2,4\} x \{1,2,4\} x \{5\}\\
Box(6) = \{1,2,5\} x \{1\} x \{4\}\\
Box(7) = \{1\} x \{1,2,5\} x \{3\}\\
Box(8) = \{1\} x \{2,4,5\} x \{4\}\\
Box(9) = \{2,4,5\} x \{2\} x \{3\}\\
Box(10) = \{2,4,5\} x \{3\} x \{3\}\\
Box(11) = \{2,3,4\} x \{3\} x \{5\}\\
Box(12) = \{3\} x \{2,3,4\} x \{3\}\\
Box(13) = \{3\} x \{2,4,5\} x \{5\}\\
Box(14) = \{4\} x \{1,2,3\} x \{1,2,4\}\\
Box(15) = \{5\} x \{1,2,3\} x \{1,2,5\}\\
Box(16) = \{2,4,5\} x \{4\} x \{1,2,4\}\\
Box(17) = \{2,4,5\} x \{5\} x \{1,2,5\}\\
Box(18) = \{1\} x \{4\} x \{1,2,3\}\\
Box(19) = \{1\} x \{5\} x \{1,2,5\}\\
Box(20) = \{3\} x \{4\} x \{1,2,4\}\\
Box(21) = \{3\} x \{5\} x \{1,2,3\}\\
Box(22) = \{1\} x \{3\} x \{3,4,5\}\\
Box(23) = \{3\} x \{1\} x \{3,4,5\}\\
Box(24) = \{4\} x \{5\} x \{4\}\\
Box(25) = \{5\} x \{4\} x \{5\}\\
}
\end{document}